\DeclareMathAlphabet{\pazocal}{OMS}{zplm}{m}{n}
\renewcommand{\KwData}{\textbf{Input:}}
\renewcommand{\KwResult}{\textbf{Output:}}
\newcommand{\T}{\mathbb{T}}
\DeclareMathOperator{\val}{val}
\DeclareMathOperator{\an}{an}
\DeclareMathOperator{\trop}{trop}
\DeclareMathOperator{\init}{in}
\DeclareMathOperator{\Trop}{Trop}
\newcommand{\field}[1]{\mathbb{#1}}
\newcommand{\R}{\field{R}}
\newtheorem{theorem}{Theorem}[section]
\newtheorem{lemma}[theorem]{Lemma}
\newtheorem{proposition}[theorem]{Proposition}
\newtheorem*{theorem*}{Theorem}
\theoremstyle{definition}
\newtheorem{example}[theorem]{Example}
\newtheorem{remark}[theorem]{Remark}
\newtheorem*{assumption*}{Assumption}
\theoremstyle{remark}
\newtheorem{definition}[theorem]{Definition}  
\numberwithin{equation}{section}
\numberwithin{table}{section}
\numberwithin{figure}{section}
\title{Faithful tropicalization of hyperelliptic curves}
\author[H. Markwig]{Hannah Markwig}
\address{Hannah Markwig, Universit\"at T\"ubingen \\ Auf der Morgenstelle 10 \\ 72076 T\"{u}bingen \\ Germany}
\email{\href{mailto:hannah@math.uni-tuebingen.de}{hannah@math.uni-tuebingen.de}}
\author[L. Ristau]{Lukas Ristau}
\address{Lukas Ristau, Fraunhofer Institute for Industrial Mathematics\\Fraunhofer-Platz 1\\67663 Kaiserslautern\\Germany
}
\email{\href{mailto:lukas.ristau@itwm.fraunhofer.de}{lukas.ristau@itwm.fraunhofer.de }}
\author[V. Schleis]{Victoria Schleis}
\address{Victoria Schleis, Universit\"at T\"ubingen \\ Auf der Morgenstelle 10 \\ 72076 T\"{u}bingen \\ Germany}
\email{\href{mailto:Victoria.Schleis@student.uni-tuebingen.de}{Victoria.Schleis@student.uni-tuebingen.de}}
\begin{document}
%
\maketitle

\begin{abstract}
We provide explicit faithful re-embeddings for all hyperelliptic curves of genus at most three and an algorithmic way to construct them. Both in the faithful tropicalization algorithm and the proofs of correctness, we showcase \textsc{Oscar}-methods for commutative algebra, polyhedral and tropical geometry. This article is submitted as a bookchapter in the upcoming \textsc{Oscar} book. \end{abstract}

\section{Introduction}

Tropical geometry allows a fruitful exchange of methods between algebraic and convex geometry, as tropicalization preserves important properties of algebraic varieties such as dimension.
The question which properties can be preserved under tropicalization --- and how --- is of foundational importance in order to make use of the exchange of methods.
In this chapter, we concretely answer this question for the case of hyperelliptic curves of genus $3$.

We assume that the reader is familiar with varieties defined over the Puiseux series e.g., their tropicalizations, in particular tropical hypersurfaces and duality, especially tropical plane curves and their dual Newton subdivisions, as in  \cite{MS2015}.

Consider a plane curve and its tropicalization. As dimension is preserved, the latter is a $1$-dimensional polyhedral complex, more precisely a balanced graph in $\mathbb{R}^2$ that is dual to its Newton subdivision, as described in  \cite{MS2015}. Is the genus of the curve reflected in its tropicalization?

Examples of a tropical line and a tropical conic, as in \cite{MS2015}, suggest that the tropicalization of a rational (i.e.\ genus $0$) curve should be a rational graph, i.e.\ a tree.
\begin{example}\label{ex-tropicalweierstrass}

What about the elliptic curve defined by the Weierstra\ss{} equation $$y^2=x^3+4\cdot x^2+8t^4\cdot x,$$ which is of genus $1$? We expect the tropicalization to be a graph containing a cycle, thus reflecting the genus.
 To compute its tropicalization, we use the following \textsc{Oscar} \cite{Oscar}  commands:

  \begin{minted}[breaklines]{jlcon}
julia> Kt,t = RationalFunctionField(QQ,"t")
(Rational function field over QQ, t)

julia> Kxy, (x,y) = Kt["x", "y"]

julia> f=-x^3-4*x^2+y^2+(-8*t^4)*x;

julia> val_t = TropicalSemiringMap(Kt,t)
The t-adic valuation on Rational function field over QQ

julia> ftrop=tropical_polynomial(f,val_t)
x^3 + x^2 + (4)*x + y^2

julia> Tf=TropicalHypersurface(ftrop)
min tropical hypersurface embedded in 2-dimensional Euclidean space

julia> PC = underlying_polyhedral_complex(Tf)
Polyhedral complex in ambient dimension 2

julia> visualize(PC)

  \end{minted}
\end{example}

 \begin{figure}
\begin{center}
\begin{tabular}{c c}
    \includegraphics[width=8.5cm,trim=300 200 100 100,clip]{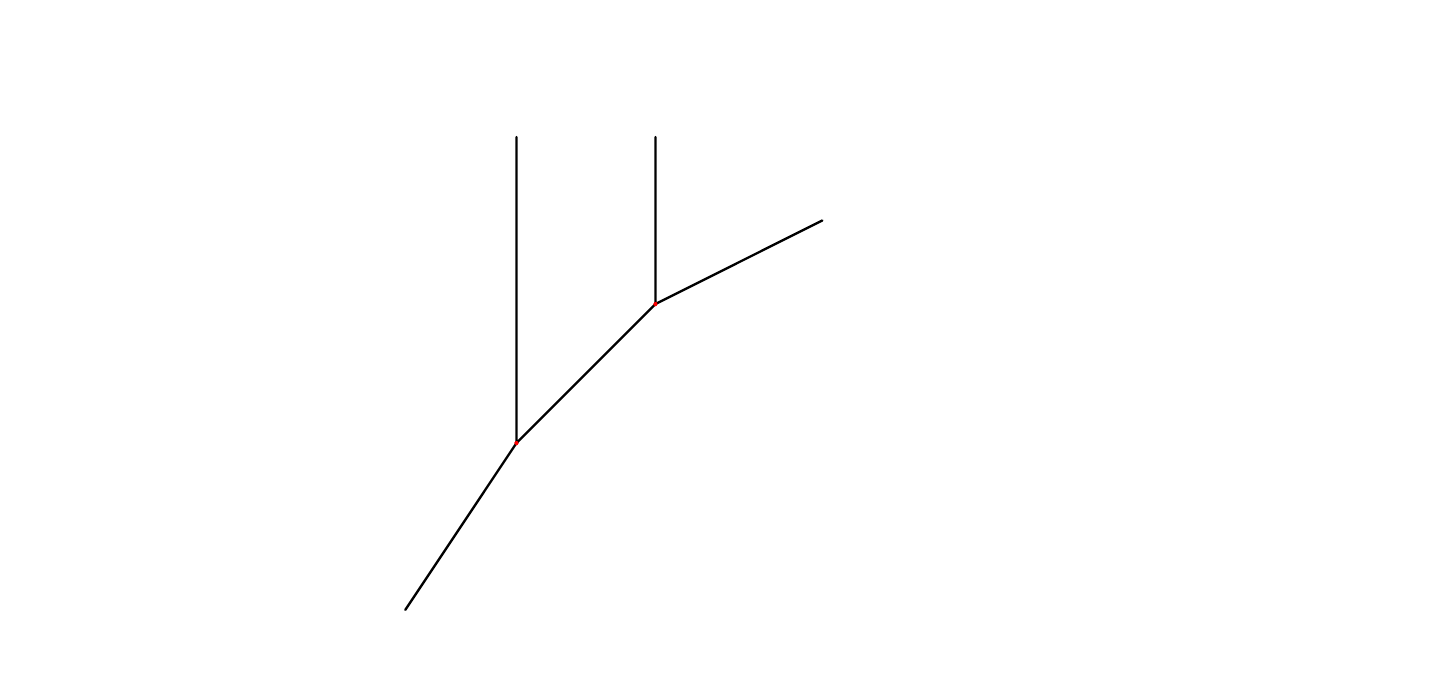}  & 
\hspace{-4cm}\includegraphics[width=8.5cm,trim=300 200 100 100,clip]{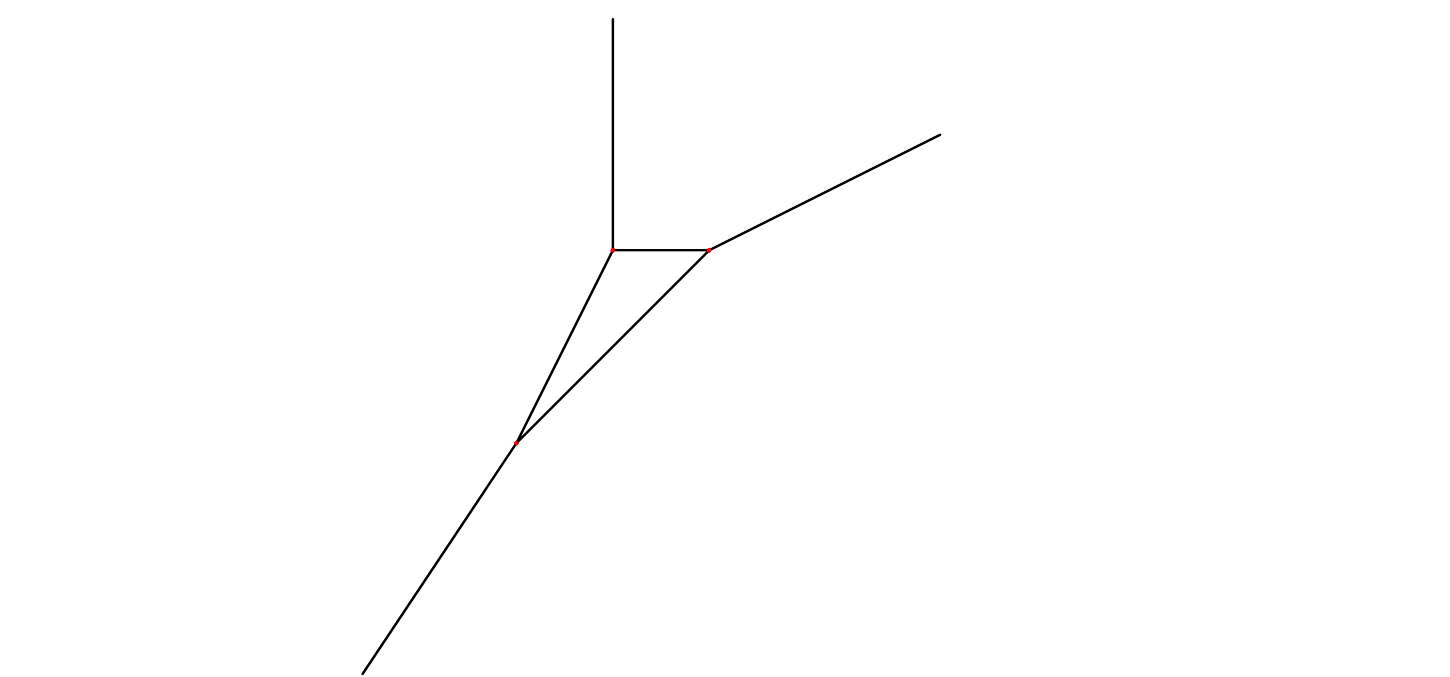}
\end{tabular}
 
\end{center}
   \caption{The tropicalization of the elliptic curve $V(y^2-x^3-4\cdot x^2-8t^4\cdot x)$ is a rational graph, depticted on the left. After the coordinate change, the tropicalization of the elliptic curve from above contains a cycle, depicted on the right.} \label{fig-ellipticrational}
 \end{figure}
 The left of Figure \ref{fig-ellipticrational} shows the output of the computation. It turns out that the tropicalization is again a rational graph, i.e.\ the genus was somehow lost in the process of tropicalization. We should not let this discourage ourselves, as we can see that this loss was happening merely because of an unfavorable choice of embedding:
 Let us change coordinates and define
 $$g=f(x,y-2x)=-x^3-4\cdot xy+y^2-8t^4\cdot x.$$
 Repeating the computation from above with $g$, we obtain the tropicalization depicted on the right of Figure \ref{fig-ellipticrational}, which contains a cycle as desired.

The genus is only a topological invariant: every elliptic curve is of genus $1$, but their various complex structures may be distinguished using the $j$-invariant. Is the $j$-invariant of an elliptic curve reflected in the tropicalization?

As suggested by Mikhalkin in \cite[Example 3.15]{Mik06}, the tropicalization of an elliptic curve defined over a field with a non-Archimedean valuation should contain a cycle whose length is equal to the valuation of the $j$-invariant.

\begin{example}\label{ex-wrongcyclelength}
Consider the elliptic cubic curve given by the equation
\begin{align*} f=&(-t^2)\cdot x^3+(t^{20})\cdot x^2y+(t^2)\cdot xy^2+(t^{14})\cdot y^3+(-3t^3)\cdot x^2+xy\\ &+(t^3+t^5-t^6)\cdot y^2+(-3t^4)\cdot x+(t+t^2)\cdot y+(2t^2+t^5).\end{align*}

Repeating the computation from Example \ref{ex-tropicalweierstrass} with $f$ we obtain the picture on the left in Figure \ref{fig-wrongcyclelength}.

\begin{figure}
 \begin{center}
 \begin{tabular}{c c}  
  \includegraphics[width=8.5cm,trim=300 0 200 220,clip]{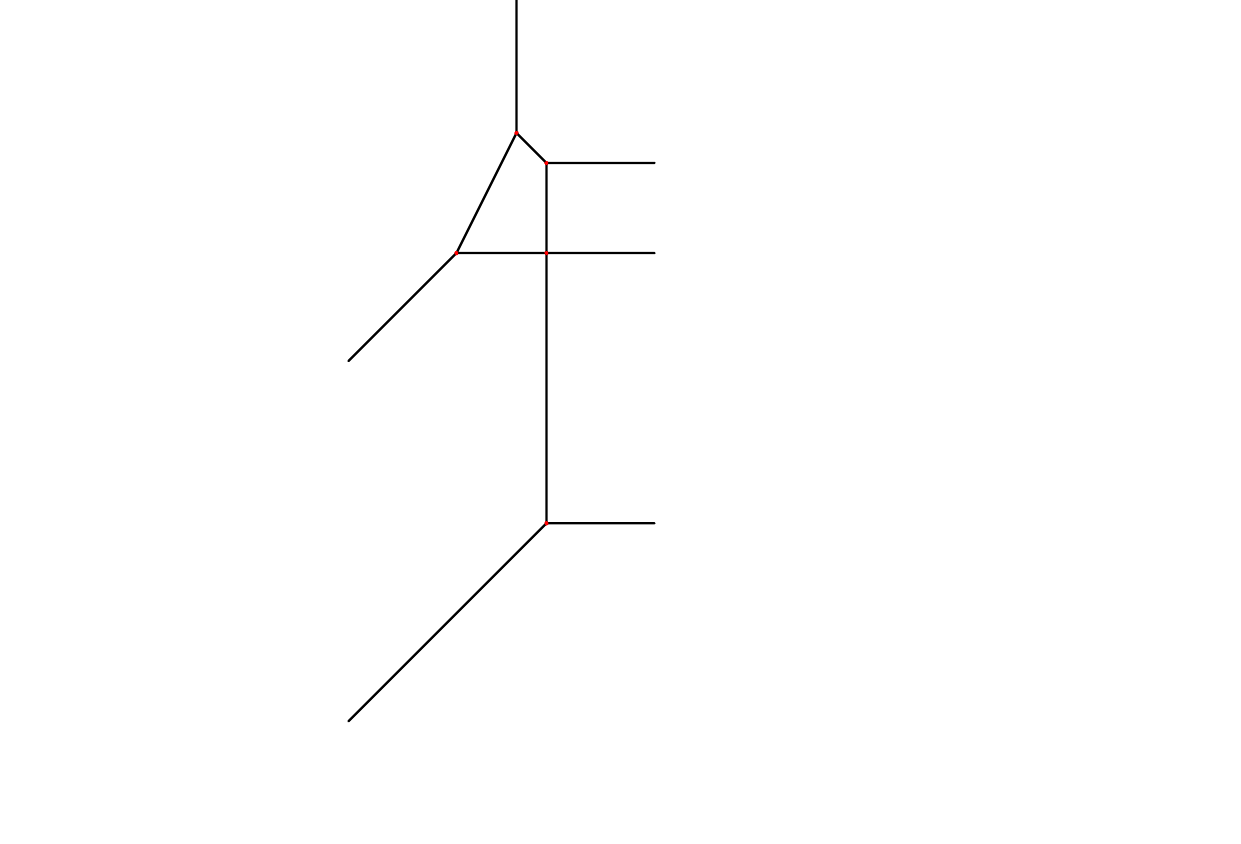}&  
 \hspace{-3cm}\includegraphics[width=8.5cm,trim=300 0 200 220,clip]{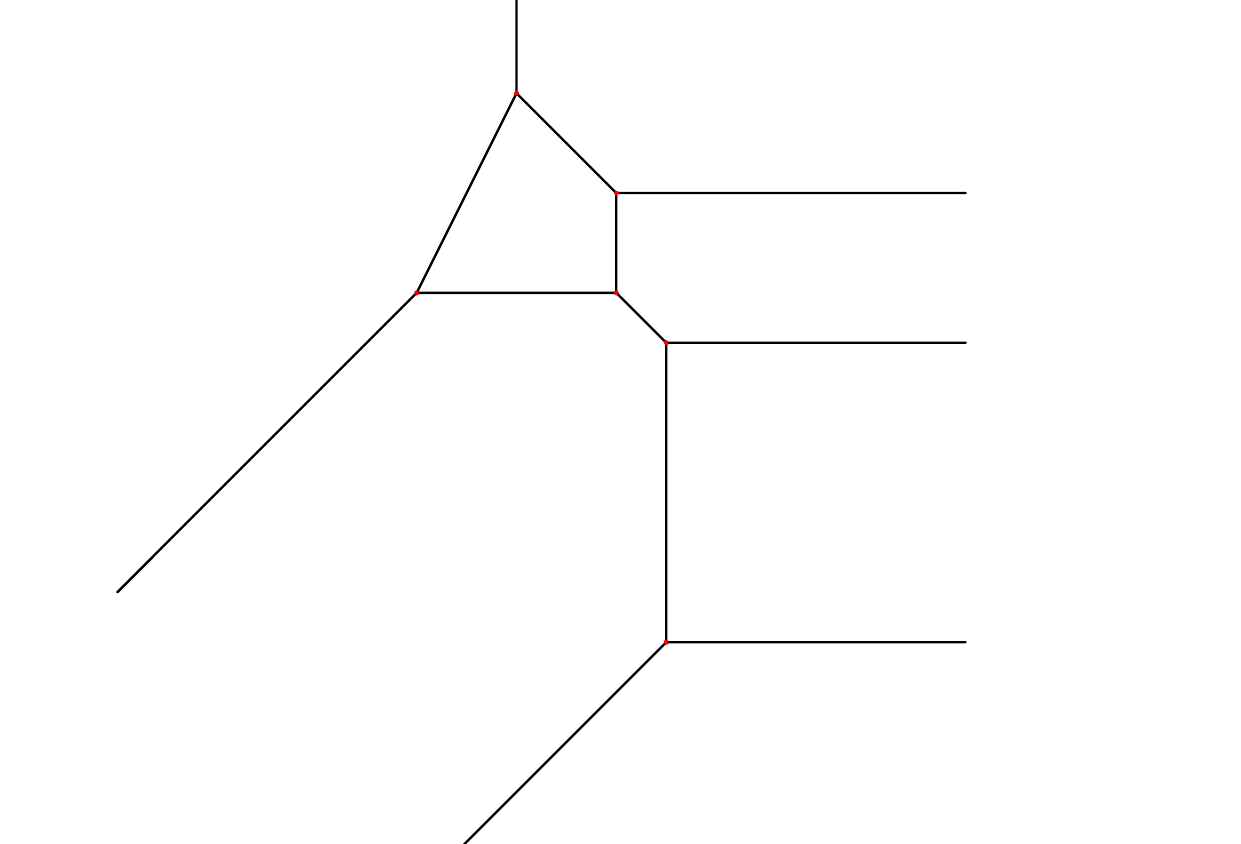}
 \end{tabular}
  \end{center}
  \caption{The tropicalization of the elliptic curve $V(f)$ of Example \ref{ex-wrongcyclelength}, depicted on the left, has a cycle, but its length does not equal the valuation of its j-invariant. After coordinate change, we obtain the expected cycle length, depicted on the right.} \label{fig-wrongcyclelength}
 \end{figure}

After a coordinate change
\begin{align*}
g=f(x-t,y)&=(-t^2)\cdot x^3+(t^{20})\cdot x^2y+(t^2)\cdot xy^2+(t^{14})\cdot y^3+(1-2t^{21})\cdot xy\\&+(t^5-t^6)\cdot y^2+(t^2+t^{22})\cdot y+(2t^2+2t^5)
\end{align*}

we obtain the picture on the right in Figure \ref{fig-wrongcyclelength}.

Using the command \verb!vertices(PC2)! for the underlying polyhedral complex \verb!PC2! of our tropical plane curve
we obtain the coordinates of the vertices:

\begin{minted}{jlcon}
julia> vertices(PC2)
6-element SubObjectIterator{PointVector{QQFieldElem}}:
 [3, -9]
 [-2, -2]
 [2, -2]
 [3, -3]
 [2, 0]
 [0, 2]
\end{minted}

Before the coordinate change, we had the vertices:

\begin{minted}{jlcon}
julia> vertices(PC1)
5-element SubObjectIterator{PointVector{QQFieldElem}}:
 [1, -11]
 [1, -2]
 [-2, -2]
 [0, 2]
 [1, 1]    
\end{minted}

Thus, before the coordinate change we obtained as cycle length 9, while we obtain 10 after the coordinate change.

 We compute the j-invariant in \textsc{Oscar} \cite{Oscar}  using the function
 \verb!j_invariant_cubic(f)! as supplied at \url{https://victoriaschleis.github.io/bookchapter}. It turns out that 10 is the valuation of the j-invariant and thus only the second embedding, after the coordinate change, reflects the j-invariant.

\end{example}

Both examples illustrate that the tropicalization of a curve depends on the chosen embedding.
We can see that for some, but not all, choices of embeddings, many properties of an elliptic curve are reflected in its tropicalization: dimension, genus, and $j$-invariant.
This motivates the following definition:

\begin{definition}[Faithful tropicalizations of elliptic curves]
An embedding of an elliptic curve $E$ with $j$-invariant $j(E)$ is called \emph{faithful} if its tropicalization is a graph containing a cycle of length $\val(j(E))$.\label{def-faithfulell}
\end{definition}
A faithful embedding of an elliptic curve can be viewed as optimal from the point of view of tropical geometry, as all important features are preserved to the zeroth order.

The fact that some, but not all, embeddings are faithful motivates the study of the limit of all tropicalizations, where tropicalization depends on the chosen embedding.

\begin{definition}[Analytification is the limit of all tropicalizations, \cite{P2009}, Theorem 1.1]
Let $\chi$ be an affine variety over $K$. Let $e$ be an affine embedding of $\chi$.  Then we define the \emph{Berkovich space} (or \emph{analytification}) $\chi^{\an} := \varprojlim \Trop(e(\chi))$, i.e. {analytification is the limit of all tropicalizations}.
\end{definition}
Historically, the analytification is defined differently (in terms of extensions of valuations) and the statement above is a theorem. For the purpose of this text however, we can take it as a definition. The statement holds analogously for projective curves after compactification, resp.\ by gluing affine charts.

\begin{example}
The tropicalization of a $\mathbb{P}^1$ can be viewed as the compactification of $\mathbb{R}$. If we embed our line into the (torus of a) plane, e.g.\ as $V(x+y+1)$ and tropicalize, we obtain the well-known picture of a tripod, which  projects onto $\mathbb{R}$. Next, we embed our line into threespace. After tropicalization, several combinatorial types may occur. In general, we obtain a tropical curve with 2 vertices and one bounded edge as in the middle of Figure \ref{fig-line}. Continuing this procedure and taking the limit of all tropicalizations, we obtain as analytification of a line
an infinite tree as sketched in the right of Figure \ref{fig-line} which is often called the witch's broom.

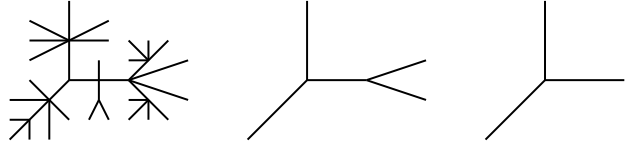
\begin{figure}\centering

\tikzset{every picture/.style={line width=0.75pt}} 

\begin{tikzpicture}[x=0.75pt,y=0.75pt,yscale=-1,xscale=1]

\draw    (-100,-180) -- (-140,-180) ;
\draw    (-140,-180) -- (-170,-150) ;
\draw    (-140,-220) -- (-140,-180) ;
\draw    (-200,-170) -- (-230,-180) ;
\draw    (-200,-190) -- (-230,-180) ;
\draw    (-230,-180) -- (-260,-180) ;
\draw    (-260,-220) -- (-260,-180) ;
\draw    (-260,-180) -- (-290,-150) ;
\draw    (-350,-180) -- (-330,-160) ;
\draw    (-320,-170) -- (-350,-180) ;
\draw    (-320,-190) -- (-350,-180) ;
\draw    (-350,-180) -- (-380,-180) ;
\draw    (-380,-220) -- (-380,-180) ;
\draw    (-380,-180) -- (-410,-150) ;
\draw    (-330,-200) -- (-350,-180) ;
\draw    (-360,-200) -- (-380,-200) ;
\draw    (-360,-210) -- (-380,-200) ;
\draw    (-380,-200) -- (-400,-190) ;
\draw    (-380,-200) -- (-400,-200) ;
\draw    (-380,-200) -- (-400,-210) ;
\draw    (-380,-160) -- (-390,-170) ;
\draw    (-390,-150) -- (-390,-170) ;
\draw    (-390,-170) -- (-400,-180) ;
\draw    (-390,-170) -- (-410,-170) ;
\draw    (-340,-170) -- (-350,-160) ;
\draw    (-340,-170) -- (-350,-170) ;
\draw    (-340,-190) -- (-350,-190) ;
\draw    (-340,-190) -- (-340,-200) ;
\draw    (-340,-190) -- (-350,-200) ;
\draw    (-340,-160) -- (-340,-170) ;
\draw    (-365,-170) -- (-365,-180) ;
\draw    (-365,-180) -- (-365,-190) ;
\draw    (-370,-160) -- (-365,-170) ;
\draw    (-360,-160) -- (-365,-170) ;
\draw    (-400,-150) -- (-400,-160) ;
\draw    (-410,-160) -- (-400,-160) ;

\end{tikzpicture}

    \caption{Two tropicalizations and a sketch of the analytification of a line.} \label{fig-line}
 \end{figure}

\end{example}

Analytifications satisfy many desirable properties. They are
infinitely branched metric graphs, \cite[Section 2.4]{BPR2014}. In some sense, analytifications can be viewed as generalizations of the $j$-invariant for elliptic curves: the analytification of an elliptic curve is an infinitely branched graph containing precisely one cycle of length $\val(j)$. More generally, the analytification of a curve $\chi$ of genus $g$ contains a finite metric subgraph of genus $g$ as a deformation retract, which is called the \emph{(minimal) Berkovich skeleton} of $\chi$. In the case of an elliptic curve, the minimal Berkovich skeleton is a loop of length $\val(j)$.

Using the analytification, we can extend our notion of faithful tropicalization to arbitrary curves:

\begin{definition}[Faithful tropicalization]
Let $\chi$ be a projective curve and $e:\chi\rightarrow \mathbb{P}^n$ an embedding of $\chi$ into projective space. The embedding $e$ of $\chi$ is called \emph{faithful} on a subgraph $\Gamma$ of the analytification $\chi^{\an}$ if $\Trop(e(\chi))$ contains an isometric copy of $\Gamma$. When not otherwise specified, we consider faithfulness with respect to the minimal Berkovich skeleton.
\end{definition}
If we fix  the cycle as the subgraph $\Gamma$ of the analytification of an elliptic curve, we recover Definition \ref{def-faithfulell}.

Faithful tropicalizations are the best tropicalizations we can hope for, in the sense that they reflect as much of the geometry of the algebraic curve as possible. Analytifications also reflect the geometry, but they are infinite objects which are hard to control. A tropicalization which is chosen arbitrarily is accessible for computations, but may not reflect the features we want to observe. A faithful tropicalization combines the advantages of both: it is a finite object which is easier to control, but it still reflects the geometric features we want to observe.

In \cite[Theorem 5.20]{BPR2014}, it is shown that faithful tropicalizations exist for every curve. But how can we find them concretely?

In this paper, we answer this question for hyperelliptic curves of genus $3$. The technique we use is called re-embedding into tropical modifications and described in detail in Section \ref{sec_reembeddings_and_blocks}.
\begin{theorem}\label{thm-main}
Let $\chi$ be a hyperelliptic curve of genus $3$. Then Table \ref{table_reembedding} gives re-embedding polynomials such that the re-embedding into the tropical modification induced by the polynomials is a faithful tropicalization of $\chi$.
\end{theorem}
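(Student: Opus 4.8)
The plan is to put $\chi$ in hyperelliptic normal form, classify the finitely many combinatorial types of its Berkovich skeleton in terms of the branch data, and then check type by type that the re-embedding recorded in Table~\ref{table_reembedding} exposes this skeleton isometrically. First I would write $\chi$ as a double cover $y^2 = f(x)$ with $\deg f \in \{7,8\}$, so that $\chi$ is branched over $8$ points of $\P^1$ --- the roots of $f$, together with the point at infinity when $\deg f = 7$. By the theory of skeleta of hyperelliptic curves \cite{BPR2014}, the minimal Berkovich skeleton is a hyperelliptic metric graph determined by the tree these branch points span in $\P^{1,\an}$, and its edge and cycle lengths are explicit functions of the pairwise valuations $\val(a_i - a_j)$ (the cluster data of the branch points). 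Enumerating the possible cluster shapes of $8$ points produces the finite list of combinatorial types indexing Table~\ref{table_reembedding}.

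Second, for each type I would exhibit the re-embedding polynomials as the \emph{building blocks} of Section~\ref{sec_reembeddings_and_blocks}. The naive plane tropicalization of $V(y^2 - f(x))$ is typically not faithful --- exactly the failure seen above for elliptic curves --- because several branch points collapse onto a single vertex and a cycle of the skeleton stays hidden. The cure is a tropical modification of the curve along a function whose graph over $\Trop(V(y^2 - f))$ unfolds the hidden cycle to its true length; these functions are the building blocks, one attached to each cycle of the skeleton. For a single isolated cycle I would first verify that the corresponding modification reproduces that cycle, reading its length off the bounded edges of the modified tropical curve and matching it against the cluster data.

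Finally, I would assemble the blocks by modifying simultaneously along all the chosen functions and argue that the resulting re-embedded tropicalization contains an isometric copy of the entire skeleton. The main obstacle is the isometry verification: one must show not merely that a cycle of the correct combinatorial type appears, but that every cycle occurs with exactly the length predicted by $\val(a_i - a_j)$, and that the building blocks are mutually compatible, so that exposing one cycle neither distorts nor reabsorbs another. I expect this to reduce, in each of the finitely many cases, to an explicit computation of the bounded edges of the modified tropical curve, carried out with the \textsc{Oscar} tools illustrated above; matching those lengths to the cluster valuations is the crux, while the genus-$3$ enumeration guarantees that Table~\ref{table_reembedding} covers every such curve.
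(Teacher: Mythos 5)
Your overall architecture --- normal form $y^2=f(x)$, a finite classification of the possible minimal skeleta by conditions on the branch data, one re-embedding polynomial per building block, and a gluing step --- is the same divide-and-conquer strategy the paper follows (Table \ref{table_standard_trop_coeff_cond}, Section \ref{sec_local_faithful}, Theorem \ref{thm_mainthm_point_connectors_minimal_berkovich}). The gap is in how you propose to certify faithfulness. You make the isometry check the crux: compute the bounded edges of the modified tropical curve and match their lengths against the cluster valuations $\val(\alpha_i-\alpha_j)$. That requires an independent formula for every edge and cycle length of the minimal skeleton, and then a length computation inside the modification for each of the many combinatorial cases; you supply neither. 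The paper avoids this entirely via the certification criterion of Lemma \ref{lem: cartify_faithful} (\cite{BPR2014}, Theorem 5.24): if the re-embedded tropical curve contains a copy of the skeleton all of whose edges have tropical multiplicity one and whose vertices are trivalent, that copy is \emph{automatically} isometric. This converts the metric question into a combinatorial one about multiplicities, read off from lattice lengths in Newton subdivisions, and is precisely what makes the case analysis finite and checkable. Without it, your plan stalls at the step you yourself identify as the crux.

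A second missing ingredient is the mechanism for computing the tropicalization of the re-embedded curve at all: after modification the curve sits in $\R^3$ (or higher) and is no longer a hypersurface, so there is no Newton polygon to subdivide directly. The paper's tool is Lemma \ref{lem_proj_lemma_genus_n}: the re-embedded tropical curve is completely determined by finitely many coordinate projections, each computed by elimination; your ``finitely many cases'' then materialize as a subdivision of the cone of admissible coefficient valuations ($81$ subcones, $16$ maximal, for the $3$-theta of Theorem \ref{thm_3-thetas_faithful}), and even then one elimination fails to terminate and must be replaced by a cross-verification argument. So ``an explicit computation carried out with \textsc{Oscar}'' is not a turnkey step but the bulk of the proof. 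Two smaller points: the building blocks are not one per cycle --- a single cubic re-embedding polynomial exposes both cycles of a $3$-theta at once --- and your assembly step needs the shifting statement (Lemma \ref{prop_shifting_building_blocks}) together with the multiplicity-one bridge argument to guarantee that modifying simultaneously neither distorts nor reabsorbs a block already exposed, which the paper handles by eliminating the disjoint sets of new variables.
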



Faithful tropicalizations of curves of genus $2$ have been studied in \cite{CM2019}. The concrete construction of a faithful tropicalization depends on a case-by-case analysis, considering different possibilities for equalities and inequalities among the valuations of the coefficients.
As the Newton polygon of such a curve is sufficiently small, it was possible to perform this case-by-case analysis by hand for curves of genus $2$.

Generalizing to hyperelliptic curves of genus $3$ enlarges the amount of possibilities for inequalities among the valuations of the coefficients and the Newton subdivisions to such an extend that a case-by-case analysis by hand is out of reach. We have obtained Theorem \ref{thm-main} with the aid of \textsc{Oscar} \cite{Oscar} .

\begin{center}
    \textsc{Acknowledgements}
\end{center}

 We thank Yue Ren for his feedback on a previous version. The authors were supported by the Deutsche Forschungsgemeinschaft (DFG, German Research
	Foundation), Project-ID 286237555, TRR 195.

\section{Techniques for faithful re-embeddings}\label{sec_reembeddings_and_blocks}

In the following, we analyze hyperelliptic curves over a valued field $K$ of characteristic 0 given by a defining equation
\begin{equation}\label{eq_defining}
  g: \hspace{0.3cm}  y^2 = x\cdot\prod_{i=1}^{8}(x_i-\alpha_i);
\end{equation}
and we denote the valuations of coefficients by $\omega_i = \val(\alpha_i)$.

In the introduction we showed an example of a coordinate change that resulted in a faithful tropicalization. In this section, we will introduce techniques to construct these coordinate changes methodically, by using \emph{re-embeddings} into \emph{tropical modifications}.

The faithful re-embedding construction we describe follows the divide-and-conquer principle.
 Given a hyperelliptic curve $\chi$ of genus $g\leq 3$ defined by an equation $g$, see \eqref{eq_defining}, we \emph{separate} the tropical hyperelliptic curve into subgraphs according to conditions on the initial forms of $g$ using Table \ref{table_standard_trop_coeff_cond}. For each subgraph identified this way, we change coordinates by \emph{finding a correct re-embedding polynomial $f_i$} using Table \ref{table_reembedding}.
For all subgraphs connected by bridges, we further \emph{reduce the dimension} of the ambient space of the re-embedding via Theorem \ref{thm_mainthm_bridges}.

In this section we show how the Berkovich skeleton can be decomposed into subgraphs that can be considered independently when constructing the re-embedding. Further, we provide details on the re-embeddings themselves. Finally, we describe how to to check faithfulness on a tropical curve.

Throughout this section, we will follow the example of the elliptic curve $V(y^2 -x^3-4x^2-8t^4x)$ on the left in Figure \ref{fig-ellipticrational} and construct its faithful tropicalization using the methods we introduce.
\subsection{Classifying hyperelliptic curves via their Berkovich skeleton.}

Berkovich skeleta of hyperelliptic curves can be glued from a small collection of possible subgraphs, called \emph{building blocks}. In our construction of faithful re-embeddings, these subgraphs serve a vital role: It is sufficient to determine the faithful re-embeddings of building blocks as re-embeddings can be glued with methods we present in Section \ref{sec_globalizing}.

\begin{remark}\label{description subgraph at xi}
    The types of subgraphs that can occur are listed in Table \ref{table_standard_trop_coeff_cond}. A subgraph at $x^i$ is the skeleton whose leftmost vertex is dual to an edge in the Newton subdivision incident to $x^i$. If the subgraph consists of only a point, $x^i$ is the leftmost point on a length $k$ vertical edge, corresponding to the multiplicity $k$ edge of the tropical curve adjacent to the point.
\end{remark}

\begin{lemma}\label{lem_hyperell_involution}
The minimal Berkovich skeleton of a hyperelliptic curve is built by gluing building blocks of the types given in Table \ref{table_standard_trop_coeff_cond}, and the coefficient conditions in Table \ref{table_standard_trop_coeff_cond} hold.
\end{lemma}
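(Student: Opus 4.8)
The plan is to reduce the computation of the minimal Berkovich skeleton to the combinatorics of the branch locus, using the hyperelliptic involution $\iota\colon(x,y)\mapsto(x,-y)$. The quotient $\chi/\iota$ is $\mathbb{P}^1$, so there is a degree-two map $\pi\colon\chi\to\mathbb{P}^1$ whose branch locus is the set of Weierstra\ss{} points of $\chi$; these are the zeroes of $x\cdot\prod_i(x-\alpha_i)$, together with $\infty$ if this polynomial has odd degree. Passing to analytifications gives a degree-two cover $\pi^{\an}\colon\chi^{\an}\to(\mathbb{P}^1)^{\an}$. First I would recall that the minimal skeleton of $(\mathbb{P}^1)^{\an}$ relative to the branch locus is the finite metric tree $T$ spanned by the branch points inside the Berkovich line, and that its vertices and edge lengths are governed entirely by the valuations $\omega_i=\val(\alpha_i)$ through the tropical distances $\val(\alpha_i-\alpha_j)=\min(\omega_i,\omega_j)$ when $\omega_i\neq\omega_j$. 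This is where the coefficient data enters: the shape of $T$, i.e.\ which branch points cluster together and at what depth, is recorded exactly by the inequalities among the $\omega_i$.

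Next I would lift $T$ through $\pi^{\an}$. By semistable reduction for double covers --- equivalently the theory of admissible covers used in \cite{BPR2014} --- the minimal skeleton of $\chi^{\an}$ is obtained from the preimage of $T$, and its combinatorial type is dictated by the parity of the number of branch points in each cluster. The principle is standard: over a segment of $T$ whose removal leaves an even number of branch points on each side the two sheets of $\pi$ remain disjoint, producing a pair of parallel edges and hence a loop, whereas over an odd cluster the two sheets are joined at a ramification vertex, producing a bridge or a leaf. Consequently the loops of the skeleton --- and thus the genus --- are produced precisely by the even clusters of the branch locus, and the length of each loop is read off as a difference of the relevant valuations. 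Matching each local picture of the preimage against a row of Table \ref{table_standard_trop_coeff_cond} simultaneously yields the building-block type and the defining inequalities on $\{\omega_i\}$, which are nothing but the initial-form conditions making that block appear.

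The enumeration underlying the table is finite because $g=3$ forces at most $2g+2=8$ branch points, so $T$ has boundedly many leaves and internal vertices. I would run through the possible cluster shapes in order of nesting: an isolated branch point yields a point/leaf block, a pair of branch points at equal depth yields a single loop, and successively deeper or larger even clusters yield the remaining blocks of Table \ref{table_standard_trop_coeff_cond} (chains of loops and, for genus three, theta-type configurations). For each shape I would verify that the attached valuation conditions are exactly those singling out the corresponding Newton subdivision, so that the recipe recovers both the block and its coefficient conditions, and then check that gluing the local pictures along shared vertices reproduces the full skeleton.

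The main obstacle is \emph{completeness and non-redundancy} of the table: one must guarantee that every configuration of up to eight branch points compatible with genus $3$ occurs as a gluing of the listed blocks, and that the valuation inequalities attached to each block are precisely --- neither weaker nor stronger than --- the conditions producing it. The delicate cases are coincidences, where several $\omega_i$ agree or several branch points collapse onto a single vertex of $T$; these are exactly the transitions at which clusters merge and one block type degenerates into another, so the boundary inequalities must be tracked with care. This bookkeeping is what makes a hand analysis infeasible and motivates the \textsc{Oscar}-assisted verification.
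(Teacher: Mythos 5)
Your proposal takes a genuinely different route from the paper. The paper's proof of this lemma is essentially a citation: the admissible block types are quoted from Corey's classification of hyperelliptic-type skeleta \cite[Theorem 1.1]{corey2021hyperelliptictype}, the coefficient conditions for cycles, $2$-thetas, points and bridges are taken over (or straightforwardly generalized) from the genus-$2$ analysis in \cite[Table 5.1]{CM2019}, and the only genuinely new ingredient --- the conditions for $3$-thetas --- is deferred to Theorem \ref{thm_3-thetas_faithful}, where it is established computationally. You instead sketch a from-scratch argument via the hyperelliptic involution: the skeleton of $\chi^{\an}$ as a degree-two cover of the tree spanned by the branch points in $(\mathbb{P}^1)^{\an}$, with loops arising from even clusters. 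This is indeed the standard mechanism underlying the cited results, so your approach is sound and has the advantage of explaining \emph{why} the blocks and their conditions arise; what it costs is that the finite but intricate case enumeration (completeness of the table, exactness of the inequalities) is announced as a program rather than executed, whereas that is precisely the burden the paper discharges by citation plus the later \textsc{Oscar}-assisted verification for $3$-thetas.

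One imprecision worth fixing: you assert that the shape of the branch tree $T$ is ``governed entirely by the valuations $\omega_i$'' via $\val(\alpha_i-\alpha_j)=\min(\omega_i,\omega_j)$. That identity only holds when $\omega_i\neq\omega_j$; the clusters that actually produce genus are exactly those with $\omega_i=\omega_j$ and $\val(\alpha_i-\alpha_j)>\omega_i$, which is why Table \ref{table_standard_trop_coeff_cond} must impose conditions on the initial forms $\init(\alpha_i)$ and not only on the $\omega_i$. You do flag these coincidences as the delicate cases at the end, but the data determining $T$ is the full collection of pairwise valuations $\val(\alpha_i-\alpha_j)$ --- equivalently the $\omega_i$ together with the initial-form (in)equalities --- and the case analysis has to be organized around that finer stratification from the start, not treated as a boundary degeneration of the valuation-only picture.
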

\begin{proof}
The types of subgraphs that can appear in the Berkovich skeleton are a special case of \cite[Theorem 1.1]{corey2021hyperelliptictype}. The coefficient conditions for cycles, 2-thetas, points and bridges are due to (or straightforward generalizations of) the conditions given in \cite[Table 5.1]{CM2019}, and the correctness of the conditions for 3-thetas will be shown later in Theorem \ref{thm_3-thetas_faithful}.
\end{proof}

  \begin{table}[htbp]
\centering 
\small
\begin{tabular}{cccc}
\toprule 

\makecell{\textbf{Subgraph}\\ \textbf{at} $x^i$}& \makecell{\textbf{Local}\\ \textbf{Berkovich} \\ \textbf{skeleton}} & \makecell{ \textbf{ Local picture in the} \\ \textbf{ Newton subdivision}} & \makecell{ \textbf{Coefficient}\\ \textbf{conditions}} \\ \midrule \midrule

\makecell{$3$-Theta }& \begin{minipage}{2.5cm}\centering

\begin{tikzpicture}
            \draw(1,1.5) arc (90:-90:0.5);

            \draw(0,0.5) arc (90:-90:-0.5);
            \draw(0,0.5) -- (1,0.5);
            \draw(0,1.5) -- (1,1.5);
            \draw(0,1.5) -- (0,0.5);
            \draw(1,1.5) -- (1,0.5);
            \fill (0,0.5) circle (1.5pt);
            \fill (0,1.5) circle (1.5pt);
            \fill (1,0.5) circle (1.5pt);
            \fill (1,1.5) circle (1.5pt);
          \end{tikzpicture}

\end{minipage}
& \begin{minipage}{4cm} \centering \begin{tikzpicture}[scale = 0.55]
\draw[opacity = 1]
(0,2) -- (1,0)

(1,0) -- (2,0)
(6,0) -- (7,0)
(0,2) -- (7,0);

7

\draw[purple]
(0,2) -- (2,0)
(0,2) -- (4,0)
(0,2) -- (6,0)
(2,0) -- (6,0)
;

\fill[opacity = 0.5] (0,0) circle (2pt);
\fill (1,0) circle (2pt);
\fill (2,0) circle (2pt);
\fill[purple] (3,0) circle (2pt);
\fill (4,0) circle (2pt);
\fill[purple] (5,0) circle (2pt);
\fill (6,0) circle (2pt);
\fill (7,0) circle (2pt);
\fill[opacity = 0.5] (0,1) circle (2pt);
\fill[purple] (1,1) circle (2pt);
\fill[purple] (2,1) circle (2pt);
\fill[purple] (3,1) circle (2pt);
\fill (0,2) circle (2pt);

\end{tikzpicture} \end{minipage}

& \makecell{
$\omega_1 < \omega_{2}< \omega_{3} < \omega_{5}< \omega_{7}< \omega_8$;
\\  $\omega_3 = \omega_4,\omega_{5} =  \omega_{6}$ \\  $\init(\alpha_{3}) = \init(\alpha_{4}), \init(\alpha_{5}) = \init(\alpha_{6})$ }  \\ \midrule
\makecell{$2$-Theta }& \begin{minipage}{2.5cm}\centering

\begin{tikzpicture}
            \draw(0,1.5) arc (90:-90:0.5);

            \draw(0,0.5) arc (90:-90:-0.5);
            \draw(0,1.5) -- (0,0.5);
            \fill (0,0.5) circle (1.5pt);
            \fill (0,1.5) circle (1.5pt);
          \end{tikzpicture}

\end{minipage}
& \begin{minipage}{4cm} \centering \begin{tikzpicture}[scale = 0.55]
\draw[opacity = 1]
(0,2) -- (1,0)

(1,0) -- (2,0)
(4,0) -- (7,0)
(0,2) -- (7,0);

7

\draw[purple]
(0,2) -- (2,0)
(0,2) -- (4,0)
(2,0) -- (4,0)
;

\fill[opacity = 0.5] (0,0) circle (2pt);
\fill (1,0) circle (2pt);
\fill (2,0) circle (2pt);
\fill[purple] (3,0) circle (2pt);
\fill (4,0) circle (2pt);
\fill (5,0) circle (2pt);
\fill (6,0) circle (2pt);
\fill (7,0) circle (2pt);
\fill[opacity = 0.5] (0,1) circle (2pt);
\fill[purple] (1,1) circle (2pt);
\fill[purple] (2,1) circle (2pt);
\fill[opacity = 0.5] (3,1) circle (2pt);
\fill (0,2) circle (2pt);

\end{tikzpicture} \end{minipage}

& \makecell{
$\omega_i < \omega_{i+1} < \omega_{i+2} < \omega_{i+4} <\omega_{i+5}$\\ $i$ odd;
\\  $\omega_{i+2} = \omega_{i+3};$ \\  $\init(\alpha_{i+2}) = \init(\alpha_{i+3}$) }  \\ \midrule
\makecell{Cycle}& \begin{minipage}{2.5cm}\centering

\begin{tikzpicture}
            \draw(0,1.5) arc (90:-90:0.5);

            \draw(0,0.5) arc (90:-90:-0.5);
          \end{tikzpicture}

\end{minipage}
& \begin{minipage}{4cm} \centering \vspace{0.1cm}\begin{tikzpicture}[scale = 0.55]
\draw[opacity = 1]
(0,2) -- (1,0)

(1,0) -- (2,0)
(2,0) -- (7,0)
(0,2) -- (7,0);

7

\draw[purple]
(0,2) -- (2,0)
;

\fill[opacity = 0.5] (0,0) circle (2pt);
\fill (1,0) circle (2pt);
\fill (2,0) circle (2pt);
\fill (3,0) circle (2pt);
\fill (4,0) circle (2pt);
\fill (5,0) circle (2pt);
\fill (6,0) circle (2pt);
\fill (7,0) circle (2pt);
\fill[opacity = 0.5] (0,1) circle (2pt);
\fill[purple] (1,1) circle (2pt);
\fill[opacity = 0.5] (2,1) circle (2pt);
\fill[opacity = 0.5] (3,1) circle (2pt);
\fill (0,2) circle (2pt);

\end{tikzpicture}\vspace{0.1cm}\end{minipage}

& \makecell{
$\omega_i < \omega_{i+1} < \omega_{i+2}$\\ $i$ odd }
\\ \midrule
\makecell{ $k$-point} &
\begin{minipage}{2.5cm}\centering
\begin{tikzpicture}[scale=0.5]

             \fill[opacity = 1] (0, 0) circle (3.5pt);
\node [below] at (0,-0.2) {$k$};
          \end{tikzpicture}
     \end{minipage}
           & \begin{minipage}{4cm} \centering \begin{tikzpicture}[scale = 0.55]
\draw[opacity = 1]
(0,2) -- (1,0)
(0,2) -- (7,0);


\draw[purple]
(1,0) -- (7,0)
;

\fill[opacity = 0.5] (0,0) circle (2pt);
\fill (1,0) circle (2pt);
\fill[purple] (2,0) circle (2pt);
\fill[purple] (3,0) circle (2pt);
\fill [purple](4,0) circle (2pt);
\fill[purple] (5,0) circle (2pt);
\fill[purple] (6,0) circle (2pt);
\fill (7,0) circle (2pt);

\fill[opacity = 0.5] (0,1) circle (2pt);
\fill[opacity = 0.5] (1,1) circle (2pt);
\fill[opacity = 0.5] (2,1) circle (2pt);
\fill[opacity = 0.5] (3,1) circle (2pt);
\fill[opacity = 0.5] (4,1) circle (2pt);

\fill (0,2) circle (2pt);

\end{tikzpicture} \end{minipage}& \makecell{$\omega_i < \omega_{i+1} < \omega_{i+2k+1}$ \\  $\omega_{i+1} = \dots = \omega_{i+2k}$ \\ $\init(\alpha_l) \neq \init(\alpha_j)$ \\ for $i < l < j < 2k+2$}\\ \midrule
  \midrule Bridge &
\begin{minipage}{2.5cm}\centering
          \begin{tikzpicture}[scale=1]
            \draw(-1,1.5) arc (90:-90:0.5);

            \draw(1,0.5) arc (90:-90:-0.5);

			\draw[purple]
			(-0.5,1) -- (0.5,1);

            \fill[opacity = 1] (0.5, 1) circle (1.5pt);
            \fill[opacity = 1] (-0.5, 1) circle (1.5pt);

          \end{tikzpicture}
          \end{minipage}
 &\begin{minipage}{4cm} \vspace{0.1cm} \centering
 \begin{tikzpicture}[scale = 0.55]
\draw[opacity = 1]
(0,2) -- (1,0)

(1,0) -- (2,0)
(0,2) -- (7,0);

\draw[purple]
(0,2) -- (5,0);

\draw
(2,0) -- (7,0)
;

\fill[opacity = 0.5] (0,0) circle (2pt);
\fill (1,0) circle (2pt);
\fill (2,0) circle (2pt);
\fill (3,0) circle (2pt);
\fill (4,0) circle (2pt);
\fill (5,0) circle (2pt);
\fill (6,0) circle (2pt);
\fill (7,0) circle (2pt);

\fill[opacity = 0.5] (0,1) circle (2pt);
\fill[opacity = 0.5] (1,1) circle (2pt);
\fill[opacity = 0.5] (2,1) circle (2pt);
\fill[opacity = 0.5] (3,1) circle (2pt);
\fill[opacity = 0.5] (4,1) circle (2pt);

\fill (0,2) circle (2pt);

\end{tikzpicture}\vspace{0.1cm}
\end{minipage} &
\makecell{$\omega_{i-1} < \omega_i < \omega_{i+1}$ \\ $i$ odd}

\\ \midrule
\makecell{ Point \\ connector}&
\begin{minipage}{2.5cm}\centering
\begin{tikzpicture}[scale=1]
            \draw(-1,1.5) arc (90:-90:0.5);

            \draw(0,0.5) arc (90:-90:-0.5);

            \fill[purple, opacity = 1] (-0.5, 1) circle (2pt);

          \end{tikzpicture}
\end{minipage} &\begin{minipage}{4cm} \centering  \begin{tikzpicture}[scale = 0.55]
\draw[opacity = 1]
(0,2) -- (1,0)

(1,0) -- (2,0)
(6,0) -- (7,0)
(0,2) -- (7,0)
(4,0) -- (6,0);

7

\draw[purple]
(0,2) -- (2,0)
(0,2) -- (4,0)
(2,0) -- (4,0)
;

\fill[opacity = 0.5] (0,0) circle (2pt);
\fill (1,0) circle (2pt);
\fill (2,0) circle (2pt);
\fill[purple] (3,0) circle (2pt);
\fill (4,0) circle (2pt);
\fill(5,0) circle (2pt);
\fill (6,0) circle (2pt);
\fill (7,0) circle (2pt);
\fill[opacity = 0.5] (0,1) circle (2pt);
\fill[purple] (1,1) circle (2pt);
\fill[purple] (2,1) circle (2pt);
\fill[opacity = 0.5]  (3,1) circle (2pt);
\fill (0,2) circle (2pt);

\end{tikzpicture}
\end{minipage}
& \makecell{
$\omega_i < \omega_{2j}$ \\ $i$ odd, $1\leq j \leq k+1$;
\\  $\omega_{i+2l} = \omega_{i+2l+1};$ \\   $\init(\alpha_{i+2l}) \neq \init(\alpha_{i+2l+1});$ \\ $ 1 \leq l \leq k-1;$}  \\ \bottomrule
\end{tabular}
\caption{The different building blocks appearing for hyperelliptic curves and their coefficient conditions. While point connectors and thetas have the same local Newton subdivision, their coefficient conditions and thus their behaviour under modification are distinguishable.  See Remark \ref{description subgraph at xi} for details on notation.\label{table_standard_trop_coeff_cond}}
\normalsize
\end{table}

\subsection{Re-embeddings.}
 By re-embedding a plane curve, we can lift its tropicalization in $\mathbb{R}^2$ to a tropical modification of $\mathbb{R}^2$, hoping to make new features visible in the newly attached cells and thus getting closer to the Berkovich skeleton (see, for instance, Figure \ref{modifying_along_tropical_line} and Example \ref{ex_1-thetas_reduced-dim}).  Formally, they are defined as follows:
\begin{definition}[Tropical modifications]\label{def_trop_polynomial_semiring}

Let $F$ be a polynomial over the tropical semiring $\T$. To each cell $\sigma$ of the graph of $F$ that does not have codimension 0 we attach a new cell $\varsigma$ spanned by $\sigma$ and $e_{n+1} := (0, \dots , 0, 1)$. We obtain a pure rational polyhedral complex in $\R^{n+1}$, the \emph{modification} of $\R^n$ along $F$.
\end{definition}

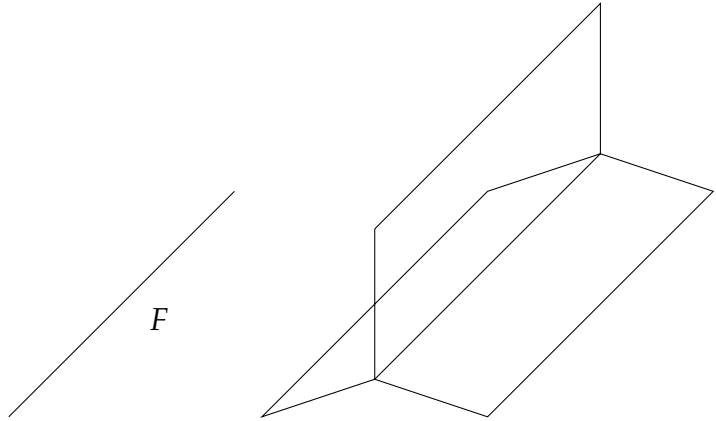
\begin{figure}
    \centering
\begin{tabular}{c c}
\begin{tikzpicture}
\draw (0,0) -- (3,3);
\node[above] at (2,1){$F$};
\end{tikzpicture} &
\begin{tikzpicture}
\draw
(0,2) -- (0,0) -- (3,3) -- (3,5)-- (0,2)
(0,0) -- (-1.5,-0.5) -- (1.5,2.5) -- (3,3)
(0,0) -- (1.5,-0.5) -- (4.5,2.5) -- (3,3)
;

\end{tikzpicture}  \\
\end{tabular}
    \caption{The modification of $\mathbb{R}^2$ along $F = \min\{x+2,y\}$.}
    \label{modifying_along_tropical_line}
\end{figure}

We can write the modification of $F$ as a tropicalization by finding appropriate \emph{polynomial lifts} $f$ of $F$, i.e. polynomials $f$ whose tropicalization is $F$:
$
f(x) = \sum_{\beta \in \text{supp(F)}} c_{\beta}x^{\beta}.
$

\begin{definition}\label{notation}
In the remainder of the text, we will use the following notation:
\begin{itemize}
    \item $\chi$ denotes a hyperelliptic curve and $g$ its defining equation of the form given in  \eqref{eq_defining};
    \item uppercase letters will be used to denote objects in the tropical world, in particular, we will use $F$ (or $F_i$) to denote the tropical polynomials in the tropical polynomial ring $\T[X,Y]$ along which we modify $\R^2$, and call them \emph{modification polynomials};
    \item lowercase letters will be used to denote objects over the valued field $K$. In particular, $f$ (or $f_i$) will denote a polynomial lift of a modification polynomial. We will call these polynomials \emph{re-embedding polynomials}.
\end{itemize}
\end{definition}

\begin{definition}[\cite{CM2019}]\label{tropical_modification_lemma}
Let $F_1, \dots F_n \in \T[X,Y]$ and $f_1 \dots, f_n$ be lifts of the respective $F_i$. Let $g$ be a defining equation for $\chi$ (see \eqref{eq_defining}). Then the tropicalization of the variety $V(I_{g,f})$ of the ideal $
I_{g,f} = \langle g, z_1-f_1, \dots, z_n -f_n \rangle \subset K[x^{\pm}, y^{\pm}, z_1^{\pm}, \dots,  z_n^{\pm}]
$
is a tropical curve in the modification of $\R^2$ along the $F_i$.
\end{definition}
\begin{definition}
     In the following, we will denote by $I_{g,f}$ the ideal corresponding to the re-embedding of the hyperelliptic curve $\chi$ given by a defining equation $g$ (see \eqref{eq_defining}) into the tropical modification induced by the re-embedding polynomials $f = (f_1, \dots, f_n)$.
\end{definition}

\subsection{Analyzing re-embeddings.}

Re-embeddings into tropical modifications only make hidden parts of the tropical curve visible if the modification curve passes through the parts we want to investigate. Then, we can uncover the hidden parts of the curve by investigating what happens on the new cells.
As hidden cycles occur on edges with multiplicity greater than one where two edges get mapped onto each other by the embedding, we restrict our attention to re-embedding polynomials of the form $f(x,y) = y - a_1x - a_2x^2 - \dots - a_nx^n \in K[x,y]$. Their tropicalizations pass through the multiplicity two edges of the tropical hyperelliptic curve before re-embedding, see Table \ref{table_standard_trop_coeff_cond}. An example of this can be found in Figure \ref{fig:running-inside-modification}.

After re-embedding the tropical curve into the modification of $\mathbb{R}^2$, we can check for faithfulness. Since re-embedding is compatible with projection, i.e.\ projections of curves inside a tropical modification are tropical curves under projection \cite{CM2019, S2020}, re-embeddings can be analyzed by investigating projections.

\begin{lemma}[\cite{CM2019}, Lemma 3.3; and \cite{S2020}, Lemma 1.1]\label{lem_proj_lemma_genus_n}
Given an irreducible curve $\chi \subset (K^*)^2$ defined by polynomials $g \in K[x,y]$ and  $f(x,y) = y - a_1x - a_2x^2 - \dots a_nx^n \in K[x,y]$, the tropicalization induced by the ideal $I_{g,f} = \langle g, z-f \rangle \in K[x^{\pm}, y^{\pm}, z^{\pm}]$ is completely determined by the tropical plane curves $\trop(V(g))$, $\trop(V(I_{g,f}\cap K[x^{\pm}, z^{\pm}]))$, $\trop(V(I_{g,f}\cap K[y^{\pm}, z^{\pm}]))$ and one further projection onto a rational plane $P$ in $\R^3$ that is not parallel to any top-dimensional cell in the tropicalization of $z-f$ and not parallel to any of the coordinate planes.
\end{lemma}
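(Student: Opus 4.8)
The plan is to reconstruct the three-dimensional tropical curve $C := \trop(V(I_{g,f})) \subseteq \R^3$ from the four listed planar projections by exploiting that $C$ is not a free curve in $\R^3$: since $z-f$ lies in $I_{g,f}$, the fundamental theorem forces $C \subseteq \Sigma := \trop(V(z-f))$, the two-dimensional modification surface of Definition~\ref{def_trop_polynomial_semiring}, consisting of the graph of $F = \trop(f)$ together with the vertical cells attached in direction $e_3$. Because $f$ is explicit, the surface $\Sigma$ and its entire cell structure are known a priori. Thus ``determining $C$'' reduces to locating a one-dimensional balanced weighted subcomplex inside the \emph{known} two-complex $\Sigma$, which I split into (i) recovering the support of $C$ and (ii) recovering its edge weights; the combinatorial graph structure then follows from the support.

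The first step recovers edge directions and cell membership from the three coordinate projections alone. Each edge $e \subseteq C$ has a primitive direction $v = (v_1,v_2,v_3) \in \Z^3$, and its images under $\pi_{xy},\pi_{xz},\pi_{yz}$ are $(v_1,v_2),(v_1,v_3),(v_2,v_3)$, which jointly determine $v$. An edge collapses to a point in a coordinate projection exactly when $v$ is parallel to the corresponding coordinate axis, which can happen for at most one of the three projections; hence $v$ is always reconstructible. These directions simultaneously tell us which top-dimensional cell of $\Sigma$ the edge lies on, so the coordinate projections already pin down the directions and the ambient cell of every edge of $C$.

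The second step uses the fourth projection to localize $C$ exactly and to extract the weights. Since $P$ is chosen parallel to no top-dimensional cell of $\Sigma$, the map $\pi_P$ restricts to an injection on each $2$-cell of $\Sigma$; therefore, within a single cell, $C$ is recovered precisely as the $\pi_P$-preimage of $\pi_P(C)$ intersected with that cell. The remaining ambiguity is that distinct cells of $\Sigma$ may project onto one another, so I declare a point $p \in \Sigma$ to lie on $C$ iff all four images $\pi_{xy}(p),\pi_{xz}(p),\pi_{yz}(p),\pi_P(p)$ lie on the respective projected curves; the requirement that $P$ be parallel to no coordinate plane guarantees that $\pi_P$ supplies information genuinely independent of the three coordinate views, so that the four consistency checks together leave no spurious lift. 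Finally, the weights are recovered from the tropical pushforward formula: the weight of an edge of each projected curve is the sum over its $C$-preimage edges of weight times the (computable) lattice index of that projection. As the edges of $C$ are already geometrically located, these identities form a consistent linear system that determines the weights.

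The main obstacle is the exclusion of phantom lifts in the second step: a priori the common preimage $\bigcap \pi_*^{-1}(\pi_*(C))$ inside $\Sigma$ could contain segments not belonging to $C$ wherever several cells of $\Sigma$ project onto one another. Ruling these out is exactly where the genericity hypotheses do the work — transversality of $P$ to every top cell of $\Sigma$ (injectivity on cells) and non-parallelism to the coordinate planes (independence from the coordinate views) — combined with the one-dimensionality of $C$ inside the two-dimensional $\Sigma$, so that the four projections cut out $C$ rather than a strictly larger configuration. The cited compatibility of projection with tropicalization for curves in modifications (\cite{CM2019,S2020}) is what makes each $\pi_*(C)$ an honest tropical curve and legitimizes the pushforward bookkeeping used for the weights.
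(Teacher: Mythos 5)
The paper itself does not prove this lemma: it is imported verbatim from the cited sources (Lemma~3.3 of \cite{CM2019} and Lemma~1.1 of \cite{S2020}), so there is no in-paper argument to compare yours against. That said, your overall architecture does match the strategy of those sources: the containment $C := \trop(V(I_{g,f})) \subseteq \Sigma := \trop(V(z-f))$, the observation that a projection whose kernel direction is not contained in a given $2$-cell of $\Sigma$ restricts injectively to that cell, and the idea of cross-referencing several projections to locate the one-dimensional $C$ inside the explicitly known two-dimensional complex $\Sigma$.

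However, the step that constitutes the actual content of the lemma --- that the four projections jointly admit no phantom lifts --- is asserted rather than proven. You correctly identify it as ``the main obstacle'' and then dispose of it by saying the genericity hypotheses ``do the work''; that is a restatement of the claim, not an argument. A dimension count (several one-dimensional conditions inside a two-dimensional cell) does not suffice, because the four sets $\pi_*^{-1}(\pi_*(C)) \cap \Sigma$ are not in general position: they are concrete polyhedral sets determined by $C$ itself, and a segment of $\Sigma$ outside $C$ can a priori lie in all of them wherever several cells of $\Sigma$ share projected images. A correct argument has to work cell by cell through the structure of $\Sigma$: $\pi_{xy}$ is injective on the graph cells of $F$, so $\trop(V(g))$ alone determines $C$ there; the vertical cells are collapsed by $\pi_{xy}$ but are handled by $\pi_{xz}$ and $\pi_{yz}$ (each injective on a vertical cell unless the underlying edge of $\trop(f)$ is parallel to the corresponding axis); and $P$ is needed precisely to disambiguate the finitely many configurations in which distinct cells of $\Sigma$ have overlapping images under all three coordinate projections. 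Two smaller gaps: your first step claims the coordinate projections ``pin down the ambient cell of every edge,'' which presupposes a matching of edges across the different projections that you never establish; and for the multiplicities, consistency of the pushforward linear system does not imply uniqueness of its solution --- you would need each edge of $C$ to be the sole preimage of its image under at least one of the four projections, which is again exactly the no-overlap statement left unproven.
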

    \begin{figure}
        \centering
    \begin{tikzpicture}
    \draw
(0,2) -- (0,0) -- (3,3) -- (3,5)-- (0,2)
(0,0) -- (-1.5,-0.5) -- (1.5,2.5) -- (3,3)
(0,0) -- (1.5,-0.5) -- (4.5,2.5) -- (3,3)
;
\draw[ very thick]
(0.25,-0.1) -- (0.75,0.75) -- (2.25,2.25) -- (3.75,2.75)
(2.25,2.25) -- (2.25, 2.75)
(0.75,0.75) -- (0.75,1.75)
;
\draw[blue, very thick]
(0.75,0.75) -- (1.75,2.25)  -- (2.25,2.25)
(1.75,2.25) -- (1.75, 3.75)
;
\draw
(5.5, 2.5) -- (6,3.5) -- (7,4.5) -- (8,5)
(6,3.5) -- (6,5)
(7,4.5) -- (7,5)
;
\draw[blue,dashed]
(6,3.5) -- (7,4.5);
\draw
(5.5,-0.5) -- (6,0.5) -- (7,1.5) -- (8,2)
 ;
 \draw[blue]
(6,0.5) -- (6.33,1.5) -- (7,1.5) -- (6.33,1.5) -- (6.33,2)
 ;
 \draw[opacity = 0.5]
(5.4,-0.6) -- (8.1, -0.6) -- (8.1, 2.1) -- (5.4,2.1) -- cycle
(5.4,2.4) -- (8.1, 2.4) -- (8.1, 5.1) -- (5.4,5.1) -- cycle
 ;
\node at (6.85,-0.40){$xz$-projection};
\node[blue] at (6.65,3.85){$2$};
\node at (6.85,2.6){$xy$-projection};
\end{tikzpicture}
        \caption{
    The example curve $\chi = V(y^2-x^3-4x^2-8t^4x)$ in Figure \ref{fig-ellipticrational} inside of the modification constructed in Figure \ref{modifying_along_tropical_line}. We have actually already seen its $xz$-projection - it is $\trop(-x^3-4 xy+y^2-8t^4x)$ and is on the right of Figure \ref{fig-ellipticrational}.}
        \label{fig:running-inside-modification}
    \end{figure}
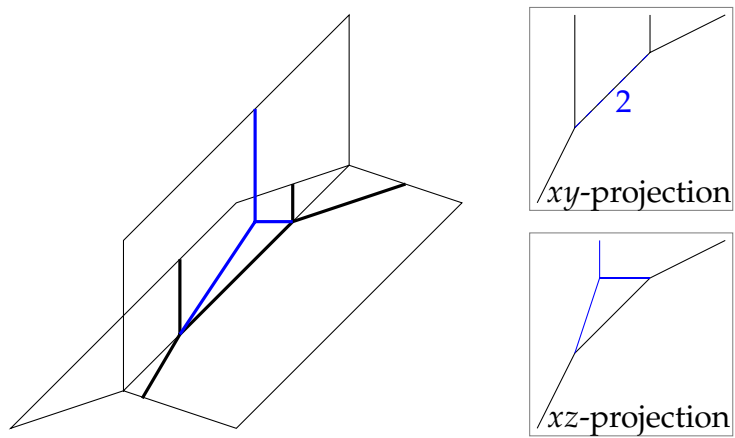
Finally, if we have identified a re-embedding polynomial constructed the appropriate tropical curve inside the modification, how can we be sure that the resulting tropical curve is actually faithful?

The answer lies in the multiplicities of the tropical curve \cite{MS2015}.
\begin{lemma}[\cite{BPR2014}, Theorem 5.24]\label{lem: cartify_faithful}
An embedded tropical curve is faithful on a subgraph $\Sigma$ of the Berkovich space $\chi^{an}$  if it contains a copy of $\Sigma$ whose edges  have tropical multiplicity one and all vertices are three-valent, i.e. have three incident edges or legs.
\end{lemma}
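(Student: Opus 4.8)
The plan is to derive this sufficient criterion from the non-Archimedean Poincar\'e--Lelong (slope) formula of \cite{BPR2014}, which controls how the tropicalization map stretches the skeleton. First I would recall that for a toric embedding each tropicalized coordinate is $\pm\log$ of the absolute value of a rational function, so by the slope formula the restriction $\trop|_{\Sigma}$ is piecewise $\Z$-affine: on each edge $e$ of $\Sigma$ it is affine with an integer slope vector, and at each vertex the outgoing slopes satisfy a balancing (harmonicity) condition. Parametrizing $e$ by arc length in the intrinsic metric on $\chi^{\an}$, its image $\trop(e)$ is a segment in a rational direction $u$ (taken primitive), and $\trop$ stretches $e$ by an integer \emph{expansion factor} $\ell_e \geq 1$ defined by $d\,\trop(v) = \ell_e\, u$ for the unit tangent $v$. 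By the definition of faithfulness given above, being faithful on $\Sigma$ means exactly that $\trop|_{\Sigma}$ is an isometry onto the distinguished copy of $\Sigma$ in $\Trop(e(\chi))$, equivalently that $\ell_e = 1$ on every edge together with injectivity of $\trop|_{\Sigma}$.

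Next I would relate these expansion factors to the tropical multiplicities appearing in the hypothesis. The key input is the computation of the tropical multiplicity of a point $w$ in the relative interior of an edge of $\Trop(e(\chi))$: it equals the sum of the local expansion factors over the points of $\chi^{\an}$ in $\trop^{-1}(w)$, with the contributions coming precisely from the skeleton edges mapping onto that segment. Hence if the image edge carries tropical multiplicity one there is a unique skeleton edge $e$ lying over it, and its expansion factor must satisfy $\ell_e = 1$. Since an expansion factor of one means $\trop$ is a local isometry on the interior of $e$, the multiplicity-one hypothesis already delivers isometry edge by edge.

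It then remains to verify that the edges are assembled correctly, i.e.\ that $\trop|_{\Sigma}$ is globally injective and that no two branches are folded together at a shared endpoint. Here I would use the three-valence hypothesis: at a vertex $x$ of $\Sigma$ its three tangent directions map to the edge directions of $\Trop$ at $\trop(x)$, and if two of them collapsed onto the same image edge that edge would acquire tropical multiplicity at least two, contradicting the multiplicity-one assumption; so the three branches hit three distinct edges and the local map on the star of $x$ is injective. Combined with the edgewise isometry, a standard connectedness argument upgrades this to global injectivity of $\trop|_{\Sigma}$, and the three distinct edges at each image vertex show that the copy really is a trivalent subgraph isometric to $\Sigma$. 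Together these prove faithfulness on $\Sigma$.

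The main obstacle is the second step: establishing the precise identity between tropical multiplicity and the sum of expansion factors, and in particular ruling out hidden contributions to the multiplicity from parts of $\chi^{\an}$ that retract into the interior of $e$ but are invisible to the edgewise slope computation. This is where the structure theory of \cite{BPR2014} --- the harmonicity of the slope formula and the compatibility of $\trop$ with the retraction to the minimal skeleton --- does the real work; the valence condition is then exactly what forbids a fold that could keep every edge of multiplicity one while still failing injectivity at a vertex.
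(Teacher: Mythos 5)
The paper offers no proof of this lemma: it is quoted verbatim as \cite{BPR2014}, Theorem 5.24, and used as a black box. Your sketch correctly reconstructs the argument of that source --- the slope formula makes $\trop|_{\Sigma}$ piecewise integral-affine with integer expansion factors, the identity ``tropical multiplicity of a generic point of an edge $=$ sum of expansion factors over its preimages in the skeleton'' forces each expansion factor to be $1$ when the multiplicity is $1$, and trivalence rules out folding at vertices --- so it matches the approach the paper implicitly relies on.
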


\section{Faithful tropicalizations of building blocks}\label{sec_local_faithful}
In this section we find polynomials for faithful re-embeddings for curves that have genus at least one, i.e. the upper part of Table \ref{table_standard_trop_coeff_cond}. We proceed by carefully analyzing the tropical curve obtained in the re-embedding by analyzing its coordinate projections.

\begin{remark}[Re-embeddings for cycles and 2-thetas]
In \cite{CM2016}, re-embedding polynomials (see Definition \ref{notation}) for cycles have been constructed as $f(x,y) = y+ \sqrt{-\alpha_3}x$
and in \cite{CM2019}, re-embedding polynomials for 2-thetas were constructed as $f(x,y) = y- \sqrt{-\alpha_3\alpha_4\alpha_5}x + \sqrt{-\alpha_5}x^2$, where the $\alpha_i$ are the roots of the defining polynomial of the hyperelliptic curve.
\end{remark}

\begin{theorem}[Faithful re-embeddings of 3-thetas]\label{thm_3-thetas_faithful}
Let $\chi$ be a hyperelliptic curve given by a defining equation $g$ satisfying the coefficient conditions of being a 3-theta as in Table \ref{table_standard_trop_coeff_cond}. Then, the embedding of $\trop(\chi)$ into the modification of $\R^2$ at $y - (\sqrt{(\prod_{i = 3}^{7} \alpha_i)} x + \sqrt{(\alpha_5\alpha_6\alpha_7)} x^2 - \sqrt{\alpha_{7}}x^3)$ given by $I_{g,f}$ is faithful on the minimal Berkovich skeleton.
\end{theorem}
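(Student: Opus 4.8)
The plan is to verify faithfulness by combining the projection criterion of Lemma \ref{lem_proj_lemma_genus_n} with the multiplicity-and-valence test of Lemma \ref{lem: cartify_faithful}. Write $F = \trop(f)$, so that $\trop(V(I_{g,f}))$ is a tropical curve in the modification of $\R^2$ along $F$, and let $\Sigma$ denote the minimal Berkovich skeleton, i.e.\ the $3$-theta graph. Since $f = y - (a_1 x + a_2 x^2 - a_3 x^3)$ with $a_1 = \sqrt{\prod_{i=3}^{7}\alpha_i}$, $a_2 = \sqrt{\alpha_5\alpha_6\alpha_7}$, $a_3 = \sqrt{\alpha_7}$ has the admissible shape required in Lemma \ref{lem_proj_lemma_genus_n}, the whole curve is determined by the four plane curves $\trop(V(g))$ (the $xy$-projection), the $xz$- and $yz$-projections, and one generic projection; so it suffices to understand each and reassemble. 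The $xy$-projection is $\trop(V(g))$, whose Newton subdivision is the one recorded in Table \ref{table_standard_trop_coeff_cond}: under the $3$-theta conditions the three cycles of $\Sigma$ are \emph{not} yet visible, being collapsed onto the multiplicity-two edges dual to the purple segments. First I would record that $F = \min\{Y,\ \val(a_1)+X,\ \val(a_2)+2X,\ \val(a_3)+3X\}$ bends precisely at $X=\omega_3$ and $X=\omega_5$, using $\val(a_1)-\val(a_2)=\omega_3$ and $\val(a_2)-\val(a_3)=\omega_5$ (these follow from $\omega_3=\omega_4,\ \omega_5=\omega_6$); this is the reason the modifying curve meets exactly the two hidden edges carrying the $(\alpha_3,\alpha_4)$- and $(\alpha_5,\alpha_6)$-cycles.

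The heart of the argument is the $xz$-projection. Setting $p(x) = a_1 x + a_2 x^2 - a_3 x^3$ and substituting $y = z + p(x)$ into \eqref{eq_defining}, the elimination ideal $I_{g,f}\cap K[x^{\pm},z^{\pm}]$ is generated by
\[
z^2 + 2z\,p(x) + p(x)^2 - x\prod_{i=1}^{8}(x-\alpha_i).
\]
I would then compute the Newton subdivision of this polynomial. The point of the particular square roots is that $a_1^2 = \prod_{i=3}^{7}\alpha_i$, $a_2^2 = \alpha_5\alpha_6\alpha_7$, $a_3^2 = \alpha_7$ are calibrated so that the top-degree part of $p(x)^2$ agrees to leading order with the corresponding part of $x\prod(x-\alpha_i)$; invoking the initial-form identities $\init(\alpha_3)=\init(\alpha_4)$ and $\init(\alpha_5)=\init(\alpha_6)$, these leading coefficients cancel, so the entries of $p(x)^2 - x\prod(x-\alpha_i)$ acquire strictly larger valuation. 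This cancellation is exactly what splits each doubled edge into an honest cycle in the $(x,z)$-plane, so that the loops hidden on the multiplicity-two edges reappear as multiplicity-one cycles. The $yz$-projection is treated analogously by eliminating $x$ via a resultant; here the re-embedding coordinates exchange roles and one checks that the remaining cycle of the $3$-theta becomes visible. (Formally, the first two terms of $p$ recover the cycle and $2$-theta constructions of the preceding remark, while the cubic term is what exposes the third loop.)

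With the three coordinate projections in hand I would compute one further projection onto a generic rational plane $P$, not parallel to any coordinate plane nor to a top-dimensional cell of $\trop(z-f)$, as Lemma \ref{lem_proj_lemma_genus_n} requires, in order to fix the global combinatorial type and confirm that the three loops are disjoint and correctly glued into $\Sigma$. Reassembling the projections shows that $\trop(V(I_{g,f}))$ contains an isometric copy of $\Sigma$. The final step is to read off tropical multiplicities and valences on this copy: I would check that every edge carries multiplicity one and every vertex is trivalent, after which Lemma \ref{lem: cartify_faithful} delivers faithfulness on $\Sigma$. One must also confirm that the three cycle lengths match the cluster invariants of $g$, so that the embedded copy is genuinely isometric and not merely homeomorphic to $\Sigma$.

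The main obstacle is the bookkeeping in the elimination step: one must track precisely which coefficients of $p(x)^2 - x\prod(x-\alpha_i)$ have their leading terms annihilated by the square-root choices, and compute the resulting valuations finely enough to determine the full Newton subdivision of each projection rather than merely its Newton polygon. The delicate point is guaranteeing that all three cycles attain multiplicity-one, trivalent form \emph{simultaneously}, without interfering at shared vertices, and that the generic projection collapses no bounded edge. Since the number of admissible orderings and equalities among the $\omega_i$ consistent with the $3$-theta conditions is large, I expect this verification to be organized as a finite case analysis carried out with \textsc{Oscar}, with the square-root identities above furnishing the uniform reason the required cancellations occur.
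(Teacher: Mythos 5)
Your plan follows essentially the same route as the paper's proof: re-embed, analyze the $xz$- and $yz$-projections via elimination, track which leading terms the square-root calibration cancels, organize the resulting valuation cases as a subdivision of a polyhedral cone handled in \textsc{Oscar}, and conclude multiplicity one plus trivalence via Lemma \ref{lem_proj_lemma_genus_n} and Lemma \ref{lem: cartify_faithful}. The only practical point you underestimate is that the direct elimination for the $yz$-projection is computationally infeasible, so the paper substitutes that step with integer specializations of some parameters, homogeneity arguments, and cross-verification polynomials; but this is an implementation detail rather than a change of method.
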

Depending on the valuations of the coefficients, several cases have to be considered. The case-by-case analysis is summed up in the following proof. The tables, code and sub\-procedures providing further details of this proof will be given throughout the section.
\begin{proof}
For the remainder of this section, let $\alpha_i = \beta_i^2$ for $i = 3, ..., 6$ and $\alpha_{7} = -\beta_{7}^2$, so that the re-embedding may be equivalently given as $y - (\prod_3^7\beta_ix+\beta_5\beta_6\beta_7x^2-\beta_7x^3).$
We have $\init(\alpha_3) = \init(\alpha_4)$ and $\init(\alpha_5) = \init(\alpha_6)$ by Table \ref{table_standard_trop_coeff_cond}. Let $d_{34} = \val(\alpha_3-\alpha_4)$ and $d_{56}=\val(\alpha_5-\alpha_6)$. We control these differences by defining new variables $ \beta_{34} := \beta_3-\beta_4$ and $ \beta_{56} := \beta_5-\beta_6$, and substituting $\alpha_3:= (\beta_4 + \beta_{34})^2$ and $ \alpha_5:= (\beta_6 + \beta_{56})^2$. We obtain
$-\val(\beta_{34}) = d_{34}+\val(\alpha_4)/2= -\val(\alpha_3-\alpha_4)+\val(\beta_4)=\omega_{34} $
and
$-\val(\beta_{56}) = d_{56}+\val(\alpha_6)/2 = -\val(\alpha_5-\alpha_6)+\val(\beta_6)=\omega_{56}. $

Substituting the new variables into the Equation \eqref{eq_defining} of the hyperelliptic curve, we obtain
$f(x,y) = y -  \beta_4(\beta_{34}+\beta_4)\beta_6(\beta_6+\beta_{56})\beta_7x - \beta_6(\beta_6+\beta_{56})\beta_7x^2 + \beta_7x^3.$
The weight vector $\underline{u}\in\R^6$ encoding the valuation of the six parameters equals
$$\underline{u}=\val(\beta_7,\beta_6,\beta_{56},\beta_4,\beta_{34},\beta_2)=(\frac{\omega_7}{2},\frac{\omega_6}{2}, d_{56}-\frac{\omega_6}{2},\frac{\omega_4}{2}, d_{34}-\frac{\omega_4}{2},\frac{\omega_2}{2}).$$
Using the valuation conditions from Table \ref{table_standard_trop_coeff_cond}, the valuations satisfy the following inequalities:
\begin{align}\label{eq_defining_inequalities_3theta}
u_7 > u_6, \hspace{0.2cm}
u_6 > u_{56}, \hspace{0.2cm}
u_6 > u_4, \hspace{0.2cm}
u_4 > u_{34},  \hspace{0.2cm}\mathrm{ and } \hspace{0.2cm}
u_4 > u_2.
\end{align}
These inequalities span a polyhedral cone $\mathcal{C} \subset \R^6$. Its proper faces correspond to other types of curves. We subdivide the cone further using the data of the $xz$- and the $yz$-projections of $I_{g,f}$, computed in \textsc{Oscar} \cite{Oscar}  via elimination. For the sake of brevity, we only supply the full computations for the maximal cones of this subdivision here. Supplementary material and further analysis of the lower dimensional cones can be found at \url{https://victoriaschleis.github.io/bookchapter}.

For both projections, we list the leading terms that can appear for all monomials in the projection. For the $xz$-projection, this gives rise to 81 cones, of which 16 are maximal, defined by the inequalities in Table  \ref{defining_conditions_xz}, and give the corresponding leading terms of the $xz$-projection in Table \ref{table_xz_analysis}.

Using the $yz$-projection, we subdivide our cones further as given in Remark \ref{lem tropical dominance conditions}. We do this computationally by taking the common refinement of the two fans spanned by these respective collections of cones. We  list the leading terms of the  $yz$-projection in  Table \ref{leading_terms_coneI}.

We proceed by giving a precise analysis of the Newton subdivision inside of each cone in Lemma \ref{3lem_existence_cones_xz} for the $xz$-projection and in Proposition \ref{prop: faithful_yz} for the $yz$-projection and list them in Table \ref{table_tropical_curves_xz} and Figure  \ref{table_newton_subdivs_yz} respectively.

Combining the information of these two projections with the naive tropicalization of the curve from Section \ref{sec_reembeddings_and_blocks} and with Lemma \ref{lem_proj_lemma_genus_n}, we obtain multiplicity one on all edges of the $3$-theta and thus, using Lemma \ref{lem: cartify_faithful}, faithfulness on the minimal Berkovich skeleton.
\end{proof}

\subsection{Analysis of the $xz$- projection}\label{sec_xz-projection}
We begin our analysis of the re-embedding $I_{g,f}$ by investigating the $xz$-projection of  Trop$(V(I_{g,f}))$. Computationally, this is done by elimination of $y$, using the \texttt{eliminate}-command in \textsc{OSCAR} :

\begin{minted}[breaklines]{jlcon}
julia> S,(b2,b34,b4,b56,b6,b7)=QQ["b2", "b34", "b4", "b56", "b6", "b7"]
(Multivariate polynomial ring in 6 variables over QQ, QQMPolyRingElem[b2, b34, b4, b56, b6, b7])

julia> R,(x, y, z)=S["x", "y", "z"]
(Multivariate polynomial ring in 3 variables over multivariate polynomial ring, AbstractAlgebra.Generic.MPoly{QQMPolyRingElem}[x, y, z])

julia> g = y^2-x*(x-b2^2)*(x-(b34+b4)^2)*(x-(b4)^2)*(x-(b6+b56)^2) *(x-(b6)^2)*(x+(b7)^2);

julia> f= y - b4*(b34 + b4)*b6*(b6 + b56)*b7*x + b6*(b6 + b56)*b7*x^2 - b7*x^3;

julia> Igf = ideal(R,[g,f]);

julia> I_xz = eliminate(Igf,[y]);
\end{minted}

The resulting projection is alternatively given by the polynomial $g_{xz} (x,z) :=  g(x, z + (\prod_{3}^{7} \beta_i) x + \beta_5\beta_6\beta_7 x^2 - \beta_{7}x^3).$

\begin{lemma}\label{3lem_existence_cones_xz}
The conditions on the valuations of the coefficients for 3-thetas (see \eqref{eq_defining_inequalities_3theta}) describe a cone $\mathcal{C}$ which is subdivided into 81 cones by inequalities of the coefficients of the $xz$-projection.  16 of these are maximal corresponding to the rows in Table \ref{defining_conditions_xz}, although only seven distinct Newton subdivisions arise from them.
\end{lemma}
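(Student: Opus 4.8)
The plan is to reduce the statement to an explicit, finite polyhedral computation on the cone $\mathcal{C}$ and carry it out in \textsc{Oscar}. First I would compute the $xz$-projection explicitly by substituting $y = z + (\prod_{i=3}^{7}\beta_i)\,x + \beta_5\beta_6\beta_7\,x^2 - \beta_7 x^3$ into $g$, obtaining $g_{xz}(x,z)$ as a bivariate polynomial in $x,z$ whose coefficients are honest polynomials in the six parameters $\beta_2,\beta_{34},\beta_4,\beta_{56},\beta_6,\beta_7$. Each coefficient of a monomial $x^a z^b$ is then a finite sum of parameter-monomials, and the valuation of each such parameter-monomial is a linear functional of the weight vector $\underline{u} = \val(\beta_7,\beta_6,\beta_{56},\beta_4,\beta_{34},\beta_2)\in\mathcal{C}$.

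Next I would tropicalize coefficient-by-coefficient. For a fixed $x^a z^b$, the valuation of its coefficient is the minimum of the linear functionals coming from its parameter-monomials, so the locus in $\mathcal{C}$ where a given monomial attains this minimum is a subcone cut out by finitely many linear inequalities, and where two monomials tie we get a wall. Taking the common refinement of all these wall arrangements over all coefficients subdivides $\mathcal{C}$. Enumerating the resulting cells — by listing, for each coefficient admitting more than one candidate, the possible dominant parameter-monomials and intersecting the associated inequalities — produces the $81$ cones, and testing full-dimensionality singles out the $16$ maximal ones, recorded by their defining inequalities in Table \ref{defining_conditions_xz} and their dominant (leading) terms in Table \ref{table_xz_analysis}. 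On each maximal cone the dominant terms determine a tropical polynomial whose Newton subdivision I would read off and compare across the $16$ cones; collating repetitions then yields the claimed $7$ distinct subdivisions, tabulated in Table \ref{table_tropical_curves_xz}.

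The step I expect to be the main obstacle is justifying that the naive minimum is the \emph{true} valuation on each maximal cone, i.e.\ that the parameter-monomials attaining the minimal linear functional do not cancel. This is exactly where the initial-form conditions of Table \ref{table_standard_trop_coeff_cond} enter: the substitutions $\alpha_3 = (\beta_4+\beta_{34})^2$ and $\alpha_5 = (\beta_6+\beta_{56})^2$ were chosen precisely so that the differences $\alpha_3-\alpha_4$ and $\alpha_5-\alpha_6$ are controlled by $\beta_{34}$ and $\beta_{56}$; one must then check on each of the $16$ cones that no coincidence among the remaining initial coefficients forces an unexpected cancellation, so that the tropicalization is genuinely governed by the combinatorics above. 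The sheer size of $g_{xz}$ and the bookkeeping of $81$ cells make a by-hand verification impractical, which is why the elimination, the enumeration and full-dimensionality tests, and the Newton-subdivision comparisons are all delegated to \textsc{Oscar}.
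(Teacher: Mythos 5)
Your proposal follows essentially the same route as the paper: compute $g_{xz}$ (the paper does this by elimination in \textsc{Oscar}, noting the equivalent direct substitution you describe), list the candidate leading parameter-monomials of each coefficient, observe that only the coefficients of $x^5$ and $x^3$ admit more than one candidate under \eqref{eq_defining_inequalities_3theta}, refine $\mathcal{C}$ by the resulting inequalities to get the $81$ cones with $16$ maximal ones of Table \ref{defining_conditions_xz}, and read off the seven distinct Newton subdivisions of Table \ref{table_tropical_curves_xz}. Your flagged obstacle --- ruling out cancellation among minimizing parameter-monomials --- is exactly what the substitutions $\alpha_3=(\beta_4+\beta_{34})^2$, $\alpha_5=(\beta_6+\beta_{56})^2$ are designed to handle, as you correctly note.
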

\begin{proof}
To analyze the Newton subdivision of $g_{xz}$, we compute all possible leading terms of all monomials in the $xz$-projection. Except for $x^5$ and $x^3$, all monomials
admit the same leading terms for any admissible choice of ordering on the $\beta_i$ using \eqref{eq_defining_inequalities_3theta}. Omitting constant coefficients, for $x^5$, the possible leading terms are $\beta_6^4$, $-\beta_{56}^2\beta_7^2$, and $-\beta_4^2\beta_7^2$. For $x^3$, the possible leading terms are $\beta_2^2\beta_6^4\beta_7^2$, $\beta_{34}^2\beta_6^4\beta_7^2$, and $\beta_4^4\beta_6^2\beta_7^2$. For the remaining coefficients, we refer to Table \ref{table_xz_analysis}.

  \begin{table}[htbp]
\centering
\begin{tabular}{ccccccccc}
\toprule 
$x^7, z^2$ &$x^6$ &$x^4$ & $x^2$ & $x$ &$x^3z$ &$x^2z$ &$xz$\\
\midrule 
$\beta_6^2$&  $\beta_4^2\beta_6^2\beta_7^2$ & $\beta_2^2\beta_4^2\beta_6^4\beta_7^2$& $\beta_2^2\beta_4^4\beta_6^4\beta_7^2$&$\beta_7$&$\beta_6^2\beta_7$&$\beta_4^2\beta_6^2\beta_7$\\
\bottomrule
\end{tabular}
\label{table_xz_analysis}
\caption{Valuations of the $xz$-projection of the modification ideal $I_{g,f}$}
\end{table}

From the computed initial forms we can determine the induced subdivision of the cone $\mathcal{C}$ by the occurring possible leading terms. The inequalities are as follows:

  \begin{table}[htbp]
\centering
\begin{tabular}{ccc||ccc}
\toprule \textbf{Cone} & \textbf{Condition 1} & \textbf{Condition 2} & \textbf{Cone}& \textbf{Condition 3} & \textbf{Condition 4}\\
\midrule $\mathcal{C}_1$ & $2\omega_6 < \omega_{56}+\omega_7$ & $\omega_{56}<\omega_{4}$ & $\mathcal{C}_A$ & $2\omega_4 < \omega_{34}+\omega_6$ & $\omega_{34}<\omega_2$\\
$\mathcal{C}_2$ & $2\omega_6 < \omega_{4}+\omega_7$ & $\omega_{4}<\omega_{56}$ & $\mathcal{C}_B$ & $2\omega_4 < \omega_{2}+\omega_6$ & $\omega_{2}<\omega_{34}$\\
$\mathcal{C}_3$ & $\omega_{56}+\omega_7<2\omega_6$ & $\omega_{56}<\omega_{4}$ & $\mathcal{C}_C$ & $ \omega_{34}+\omega_6<2\omega_4$& $\omega_{34}<\omega_{2}$\\
$\mathcal{C}_4$ & $\omega_{4}+\omega_7<2\omega_6$ & $\omega_{4}<\omega_{56}$ & $\mathcal{C}_D$ & $ \omega_{2}+\omega_6<2\omega_4$& $\omega_{2}<\omega_{34}$\\ 
\bottomrule
\end{tabular} \caption{Coefficient conditions for cones in the $xz$-projection}
 \label{defining_conditions_xz}
\end{table}

Now we can assign each of the possible leading terms of $x^5$ and $x^3$ we computed earlier to a cone: for $x^5$, $\beta_6^4 $ leads for points in $\mathcal{C}_1$ and $\mathcal{C}_2$, $-\beta_{56}^2\beta_7^2$ for  $\mathcal{C}_3$  and $-\beta_4^2\beta_7^2$ for $\mathcal{C}_4$. For $x^3$, the leading terms are $\beta_2^2\beta_6^4\beta_7^2$ for points in $\mathcal{C}_D$, $\beta_{34}^2\beta_6^4\beta_7^2$ for $\mathcal{C}_C$, and $\beta_4^4\beta_6^2\beta_7^2$ for $\mathcal{C}_A$ and $\mathcal{C}_B$.

There are seven possible combinatorial types of tropical curves that can arise in the $xz$-projection of the re-embedding, corresponding to the cones determined above. We give them and their corresponding Newton subdivisions in Table \ref{table_tropical_curves_xz}. All Newton subdivisions share a common part, given in Figure \ref{rem_general_xz_subdiv}, and one that is distinct for each type, given as a subdivision of $\mathcal{P}$ in Table \ref{table_tropical_curves_xz}.
\end{proof}

\begin{figure}
\includegraphics{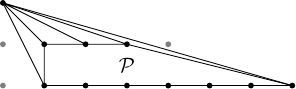}
\caption{General $xz$-subdivision}
\label{rem_general_xz_subdiv}
\end{figure}

Source codes, examples and material for the analysis of the remaining nonmaximal cones can be found at \url{https://victoriaschleis.github.io/bookchapter}. Further, an example for each maximal combinatorial type is given there.

The following Table \ref{table_tropical_curves_xz} is color-coded in accordance with the later analysis for the $yz$-projections, and the tropical curves are re-drawn versions of the \textsc{Oscar} \cite{Oscar} output to include colors.

\begin{longtable}{ c |c |c }
    \makecell{$\mathcal{C}_{1A},\mathcal{C}_{1B},\mathcal{C}_{2A}$ and $\mathcal{C}_{2B}$} & $\mathcal{C}_{1C}, \mathcal{C}_{1D}$ and $\mathcal{C}_{2C}$&$\mathcal{C}_{2D}$ \\
    \begin{minipage}{4cm} \centering \vspace{0.1cm}\includegraphics[scale=0.7]{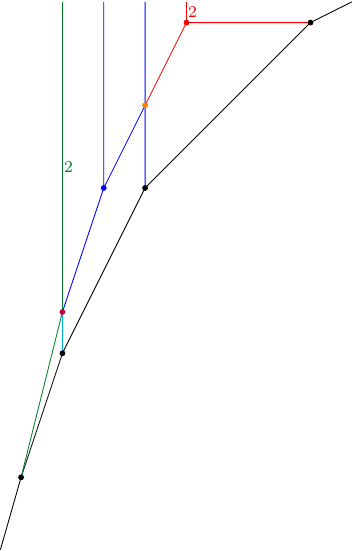}\vspace{0.1cm} 
    \end{minipage} & \begin{minipage}{4cm} \centering \vspace{0.1cm}\includegraphics[scale=0.7]{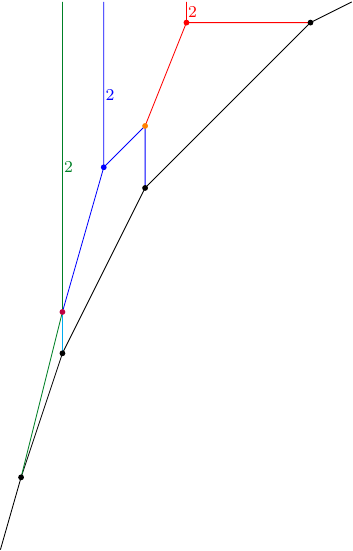}\vspace{0.1cm}
    \end{minipage} &  \begin{minipage}{4cm} \centering \vspace{0.1cm}\includegraphics[scale=0.7]{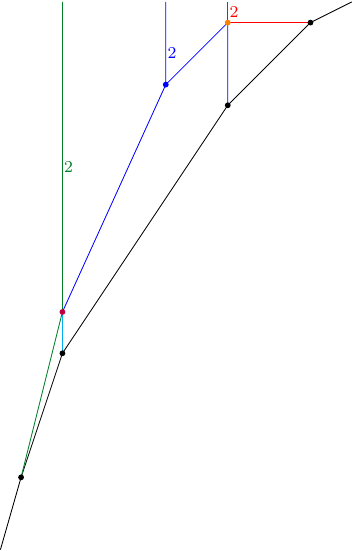}\vspace{0.1cm}
    \end{minipage} \\
    \begin{minipage}{4.25cm} \centering \vspace{0.1cm}\includegraphics{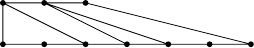}\vspace{0.1cm}
    \end{minipage}&\begin{minipage}{4.25cm} \centering \vspace{0.1cm}\includegraphics{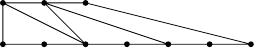}\vspace{0.1cm}
    \end{minipage} &\begin{minipage}{4.25cm} \centering \vspace{0.1cm}\includegraphics{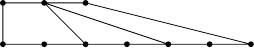}\vspace{0.1cm}
    \end{minipage}\\  \midrule
    $\mathcal{C}_{3A}$ and $\mathcal{C}_{3B}$ & $\mathcal{C}_{3C}$ & $\mathcal{C}_{3D}$ \\
    \begin{minipage}{4cm} \centering \vspace{0.1cm}\includegraphics[scale=0.7]{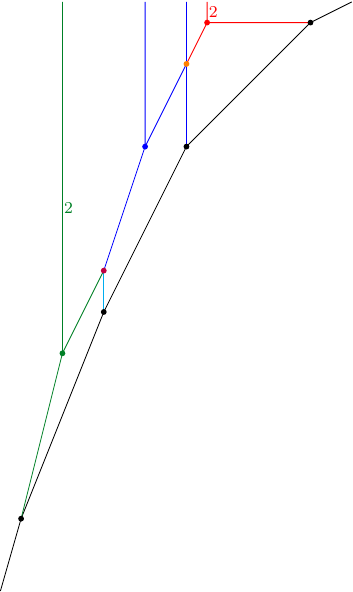}\vspace{0.1cm} 
    \end{minipage} & \begin{minipage}{4cm} \centering \vspace{0.1cm}\includegraphics[scale=0.7]{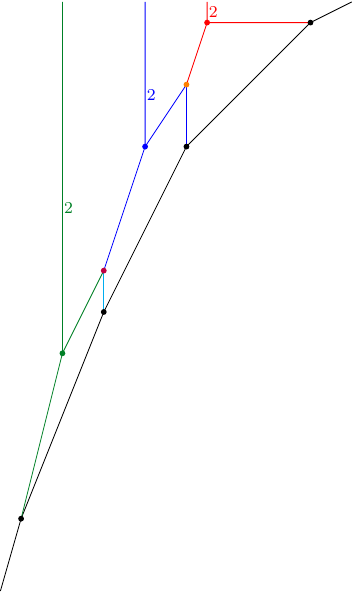}\vspace{0.1cm}
    \end{minipage} &  \begin{minipage}{4cm} \centering \vspace{0.1cm}\includegraphics[scale=0.7]{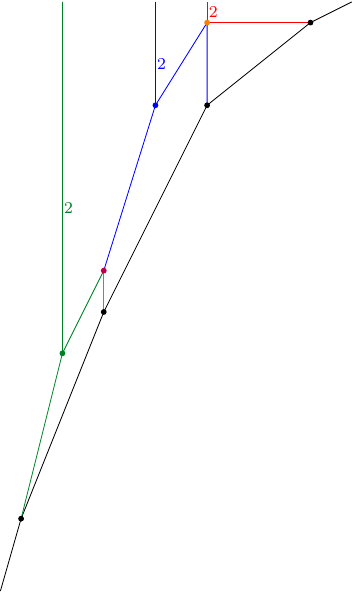}\vspace{0.1cm} 
    \end{minipage} \\   \begin{minipage}{4.25cm} \centering \vspace{0.1cm}\includegraphics{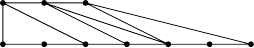}\vspace{0.1cm}
    \end{minipage}&\begin{minipage}{4.25cm} \centering \vspace{0.1cm}\includegraphics{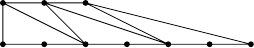}\vspace{0.1cm}
    \end{minipage} &\begin{minipage}{4.25cm} \centering \vspace{0.1cm}\includegraphics{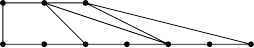}\vspace{0.1cm}
    \end{minipage}\\ \midrule
    & $\mathcal{C}_4$ & \\ & \begin{minipage}{4cm} \centering \vspace{0.1cm}\includegraphics[scale=0.7]{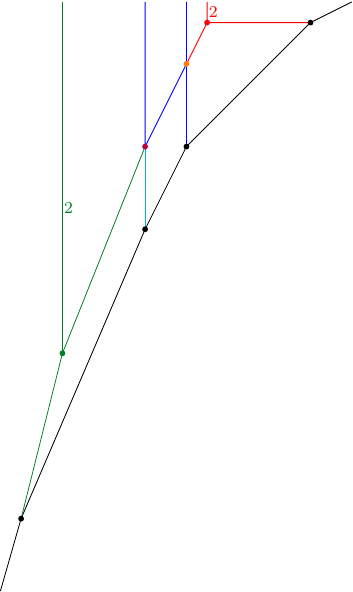}\vspace{0.1cm} 
    \end{minipage} & \\ &  \begin{minipage}{4.25cm} \centering \vspace{0.1cm}\includegraphics{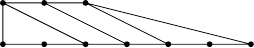}\vspace{0.1cm}
    \end{minipage}& \\

\caption{ Tropical curves associated to cones in the $xz$-projection. The associated Newton subdivisions given here are subdivisions of the trapeze $\mathcal{P}$ from Figure \ref{rem_general_xz_subdiv}.  }\label{table_tropical_curves_xz}
\end{longtable}

\subsection{Analysis of the $yz$-projection}
Since the analogous elimination computation for the $yz$-projection does not terminate within reasonable time (2 weeks), we proceed as follows. First, we investigate the structure of the possible coefficients and monomials in the $yz$-projection.

The $yz$-projection is given as the variety of a single defining polynomial of degree $7$ in $y$ and $z$. The coefficients of the monomials $y^iz^j$ are homogeneous polynomials $p_{y^iz^j}$ in $\beta_2,\beta_{34},\beta_{4},\beta_{56},\beta_{6}$ and $\beta_{7}$. The homogeneous degree of the $p_{y^iz^j}$ is 1 for monomials of degree $7$ and $(7-(i+j))\cdot 7$ otherwise. Using the defining equation of the cone of $3$-thetas \eqref{eq_defining_inequalities_3theta}, we obtain the following technical \emph{tropical dominance conditions} for coefficients:

\begin{remark}[Tropical dominance conditions]\label{lem tropical dominance conditions}
We note that by the structure of the equations, with the exception of $\beta_7$ all exponents in leading monomials are even, and if the exponent of $\beta_7$ is odd for one possible leading monomial in  $p_{y^iz^j}$, they are odd in $\beta_7$ for all leading monomials.
Thus, we can assume that inequalities on the valuations between  leading monomials are integral.
Further, there are minimal inequalities generating all inequalities, that are of the following form: $\omega_{i}<\omega_{j}$ and $\omega_{i}>\omega_{j}$ for pairs $(i,j)\in\{(2,34),(4,56)\}$, $2\omega_i < \omega_{j}+\omega_{k}$ and $2\omega_i > \omega_{j}+\omega_{k}$ for combinations $i,(j,k) \in \{(4,(2,6)),(4,(34,6)),(56,(2,6)),(56,(34,6)),(6,(4,7)),(6,(56,7))\}$.
We call a cone $\mathcal{C}_i$ in the subdivision provided by Table \ref{defining_conditions_xz} \emph{$\beta_{56}$-dominant} if $\omega_{56}<\omega_4$. In this case, only the coefficient conditions we already determined contribute.
Alternatively, we call $\mathcal{C}_i$ \emph{$\beta_4$-dominant} (i.e. $\omega_4<\omega_{56}$). In this case, we obtain an additional subdivision of the $yz$-projection, given by inequalities on $\omega_{56}$ in relation to $\omega_{34}$ and $\omega_2$, as now, potentially $\min(\omega_{34},\omega_2)<\omega_{56}$. This is not possible for $\beta_{56}$-dominant terms, as here, $\omega_{56}<\omega_4<\min(\omega_{34},\omega_2)$.
In particular, in addition to the conditions considered in Table \ref{defining_conditions_xz}, the following five inequalities contribute:

\small
\begin{longtable}{c | c | c | c}
    $\alpha_<$ & $2\beta_4 < \min(\beta_2,\beta_{34})+\beta_{56}$ & $\alpha_>$ & $2\beta_4 > \min(\beta_2,\beta_{34})+\beta_{56}$ \\
    $\beta_<$ & $\max(\beta_2,\beta_{34})+\beta_{56} < \min(\beta_2,\beta_{34})+\beta_{6}$& $\beta_>$ & $\max(\beta_2,\beta_{34})+\beta_{56} > \min(\beta_2,\beta_{34})+\beta_{6}$\\
    $\gamma_<$ & $\min(\beta_2,\beta_{34})+\beta_{4} < \max(\beta_2,\beta_{34})+\beta_{56}$ & $\gamma_>$ & $\min(\beta_2,\beta_{34})+\beta_{4} > \max(\beta_2,\beta_{34})+\beta_{56}$\\
    $\delta_<$ &  $\min(\beta_2,\beta_{34})+2\beta_{6} < 2\beta_4+\beta_{7}$ &$\delta_>$ &  $\min(\beta_2,\beta_{34})+2\beta_{6} > 2\beta_4+\beta_{7}$\\
    $\phi_<$ & $2\beta_4 < \beta_{56} + \beta_6$ &
    $\phi_>$ & $2\beta_4 > \beta_{56} + \beta_6$
\end{longtable}
\end{remark}
\normalsize

\begin{proposition}[Leading terms]
    The leading terms for the coefficients of monomials given in Table \ref{leading_terms_coneI} are correct and complete.
\end{proposition}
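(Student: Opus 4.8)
The plan is to establish \emph{correctness} and \emph{completeness} separately, substituting structural control for the symbolic elimination that fails to terminate. The $yz$-projection is cut out by the single polynomial $g_{yz}:=\mathrm{Res}_x(g,\,z-f)$, obtained by eliminating $x$ from the degree-$7$ polynomial $g$ and the cubic-in-$x$ relation $z-f=0$, whose leading coefficient is $\beta_7$ and whose constant term is $-(z-y)$. Reading $g_{yz}$ off the $10\times 10$ Sylvester matrix, it is bihomogeneous of degree $3$ in the coefficients of $g$ and degree $7$ in those of $z-f$. First I would re-derive the stated homogeneous degrees of the coefficients $p_{y^iz^j}$ from this bihomogeneity, tracking how the $y$- and $z$-dependence---confined to the two constant terms $y^2$ of $g$ and $-(z-y)$ of $z-f$---trades off against the $\beta$-degree carried by the remaining coefficients, each drop in $i+j$ absorbing one further factor of $g$; this reproduces the stated values $1$ for $i+j=7$ and $(7-(i+j))\cdot 7$ otherwise. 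This fixes, for every occurring $y^iz^j$, a finite candidate set of leading $\beta$-monomials---the vertices of the Newton polytope of $p_{y^iz^j}$---among which each initial form must be found.

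For correctness, the device that makes the analysis computable is to \emph{not} eliminate $x$ over $\Q[\beta_2,\beta_{34},\beta_4,\beta_{56},\beta_6,\beta_7]$, but to specialize $\beta_i=t^{u_i}$ for a weight vector $u$ in the relative interior of each maximal cone of the refinement from Remark~\ref{lem tropical dominance conditions}. The resultant then becomes a computation over $\Q(\!(t)\!)$ that terminates, and the $t$-valuation of each resulting coefficient, matched against the $u$-weighted degrees of the candidate monomials, singles out the dominating term and simultaneously returns its scalar coefficient. Running this once per maximal cone and comparing with Table~\ref{leading_terms_coneI} verifies every entry. Here the parity observations of Remark~\ref{lem tropical dominance conditions} are essential: because all exponents are even except for a $\beta_7$-exponent that is globally either even or odd within each $p_{y^iz^j}$, the pairwise valuation differences between competing candidates are integral, so a single interior lattice point faithfully represents its whole cone and no half-integral coincidence is missed.

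For completeness two things must be excluded. First, no tabulated leading monomial may vanish through cancellation in $p_{y^iz^j}$; this is detected automatically, since the specialized computation returns the actual initial form together with its nonzero scalar, so an accidentally vanishing Newton-polytope vertex would surface as a higher-valuation term and flag a discrepancy. Second, the listed cases must exhaust all combinatorial possibilities. Here I would invoke the minimal generating inequalities isolated in Remark~\ref{lem tropical dominance conditions}: the common refinement of the $xz$-fan of Table~\ref{defining_conditions_xz} with the five additional pairs $\alpha_{\lessgtr},\beta_{\lessgtr},\gamma_{\lessgtr},\delta_{\lessgtr},\phi_{\lessgtr}$ subdivides the $3$-theta cone \eqref{eq_defining_inequalities_3theta} into cones on each of which every $p_{y^iz^j}$ has a unique dominating vertex. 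Because those inequalities are exactly the ones comparing the finitely many candidate vertices pairwise, no finer subdivision can produce a new leading term, so the enumerated maximal cones are complete.

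I expect the main obstacle to be completeness, and within it the cancellation question rather than the cone bookkeeping. A purely tropical comparison only identifies which candidate has least weighted degree; if the rational coefficient attached to that vertex in $p_{y^iz^j}$ happened to vanish, the genuine initial form would be a different, strictly higher-valuation term, and the table would be wrong. Ruling this out over the entire cone---not merely at the finitely many sampled interior points---is where the homogeneity and parity structure must be combined with the per-coefficient symbolic data to guarantee that the distinguished vertices carry nonzero coefficients uniformly; the tropical heuristic alone does not suffice here.
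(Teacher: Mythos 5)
Your route is genuinely different from the paper's: you replace the Gr\"obner elimination by a resultant $\mathrm{Res}_x(g,z-f)$ and then certify each table entry by specializing $\beta_i=t^{u_i}$ at a single interior point $u$ of each maximal cone, whereas the paper keeps the computation symbolic in four of the six parameters (substituting integers only for $\beta_2,\beta_{34}$, respectively for one of $\beta_2,\beta_{34}$ and one of $\beta_4,\beta_{56}$ in the cross-verification runs) and then extracts leading terms from the \emph{actual support} of each $p_{y^iz^j}$ via Algorithm \ref{alg_initials} and the dominance conditions of Remark \ref{lem tropical dominance conditions}. The resultant observation is a good one ($z-f$ has unit leading coefficient $\beta_7$ in $x$, so the resultant does generate the elimination ideal), and it might even make the full symbolic computation tractable where \texttt{eliminate} does not terminate.

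There is, however, a genuine gap, and it is the one you flag in your last paragraph without resolving. Bihomogeneity fixes only the total degree of $p_{y^iz^j}$, not its support: the ``finite candidate set'' of leading monomials is not determined by the degree count, so you have no a priori list of Newton-polytope vertices against which to match the $t$-valuations, and --- more seriously --- no guarantee that the pairwise inequalities of Remark \ref{lem tropical dominance conditions} together with Table \ref{defining_conditions_xz} separate \emph{all} competitors. A single interior sample per tabulated cone therefore shows only that the claimed monomial dominates at that point; a monomial absent from your candidate list could overtake it elsewhere in the same cone, and neither the parity argument nor the nonvanishing of the sampled coefficient excludes this, so both correctness over the whole cone and completeness of the subdivision remain open. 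This is precisely what the paper's partial symbolic eliminations buy: they return the true support in $\beta_4,\beta_{56},\beta_6,\beta_7$ (and, via cross-verification, the missing $\beta_2,\beta_{34}$-exponents), so that the exhaustiveness of the cone decomposition and the nonvanishing of the distinguished coefficients are read off from the polynomials themselves rather than inferred from finitely many samples. To close your argument you would need either the full symbolic resultant or an independent bound on the support of each $p_{y^iz^j}$.
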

\begin{proof}
    We proceed in two steps. First, we consider monomials of degree  $\geq 5$.
    For monomials $y^iz^j$ of degree $\geq 5$, the $p_{y^iz^j}$ are of homogeneous degree $\leq 14$. By the structure of the equations, we then expect $p_{y^iz^j}$ to contain monomials that are only in terms of $\beta_{4},\beta_{56},\beta_{6}$ and $\beta_{7}$, and where the exponents in terms of $\beta_4$ and $\beta_{56}$ sum to less than or equal 4. Using the tropical dominance conditions from Remark \ref{lem tropical dominance conditions}, this implies that these are indeed the only possible leading terms for the monomial satisfying the defining inequality \eqref{eq_defining_inequalities_3theta}.  Thus, we can substitute integers for the coefficients $\beta_2$ and $\beta_{34}$ and compute the elimination only with respect to the remaining coefficients. This can be computed in \textsc{Oscar} \cite{Oscar}  using

\begin{minted}[breaklines]{jlcon}
julia> S,(b4,b56,b6,b7)=QQ["b4", "b56", "b6", "b7"]
(Multivariate polynomial ring in 4 variables over QQ, QQMPolyRingElem[b4,b56, b6, b7])

julia> R,(x, y, z)=S["x", "y", "z"]
(Multivariate polynomial ring in 3 variables over multivariate polynomial ring, AbstractAlgebra.Generic.MPoly{QQMPolyRingElem}[x, y, z])

julia> g = y^2-x*(x-1000^2)*(x-(1000000+b4)^2)*(x-(b4)^2)*(x-(b6+b56)^2)*(x-(b6)^2) *(x+(b7)^2);

julia> f = y - b4*(1000000 + b4)*b6*(b6 + b56)*b7*x + b6*(b6 + b56)*b7*x^2 - b7*x^3;

julia> Igf = ideal(R,[g,f]);

julia> I_xy = eliminate(Igf,[x]);
\end{minted}

     which terminates within two hours. This procedure turns the previously homogeneous polynomials into non-homogeneous ones. The leading terms are now in the coefficients of highest degree. Here, we can identify the actual leading terms using the tropical dominance conditions from Remark \ref{lem tropical dominance conditions} encoded in the following algorithm:

    \begin{algorithm}
     \label{alg_initials}
\SetAlgoLined
\KwData{A polynomial $p_{y^iz^j}$ obtained from the elimination}

\KwResult{A list of potential leading terms $lt$ satisfying the inequalities in \eqref{eq_defining_inequalities_3theta}}\;
 $h_{b_7}$ is the maximal exponent of $\beta_7$ in $p_{y^iz^j}$ potential\_monomials = list of monomials containing $\beta_7^{h_{b_7}}$\;
 $h_{b_6}$ is the maximal exponent of $\beta_6$ in potential\_monomials\;
  \While{$h_{b_6}<$ expected\_degree and $h_{b_7}$, $h_{b_{56}}$, $h_{b_4}>0$}
{ curr\_monomial = monomial of maximal exponent $h_{b_4}$ of $\beta_4$ (resp. $h_{b_{56}}$ of $\beta_{56}$) among monomials containing $\beta_6^{h_{b_6}}\beta_7^{h_{b_7}}$\;
         \eIf{deg(current\_monomial) = expected\_degree} {$lt$ = $lt\cup$ current\_monomial}
{ $lt=lt\cup$cross\_verification(current\_monomial)}
 Adjust $h_{b_7}$, $h_{b_6}$, $h_{b_{56}}$, and $h_{b_4}$ according to Remark \ref{lem tropical dominance conditions}
}

\caption{leading terms}
 \end{algorithm}

    Now, we proceed with monomials of lower degree. Here, knowing the elimination results in coefficients $\beta_{4},\beta_{56},\beta_{6}$ and $\beta_{7}$ is not sufficient, as the leading terms  contain coefficients $\beta_2$ and $\beta_{34}$.

    To tackle this, we now compute the analogous eliminations, where instead of substituting integers for $\beta_2$ and $\beta_{34}$, we substitute integers for one of $\beta_{2}$ and $\beta_{34}$ and one of $\beta_{4}$ and $\beta_{56}$ respectively, and call the outcomes of these elimination computations \emph{cross-verification polynomials}.

    Since $\beta_4,\beta_{56},\beta_6$ and $\beta_7$ remain the dominant coefficients, we first identify the potential monomials again as in Algorithm \ref{alg_initials}, remembering the gap $k$ in exponents. As we are in a maximal cone, we can assume $\omega_{34}>\omega_2$ or vice versa. Thus, in the cross-verification polynomials, we search for a term containing the correct amount of $\beta_4$ or $\beta_{56}$ respectively, and the maximal exponent of $\beta_2$ or $\beta_{34}$ less than or equal to $k$. We verify the existence of such a term using the remaining cross-verification polynomials.

    For $\beta_4$-dominant terms, in accordance with Remark \ref{lem tropical dominance conditions} we repeat the procedure and identify further leading terms by searching for terms containing maximal amounts of $\beta_2$ and $\beta_{34}$ instead of $\beta_{56}$. For mixed terms, we again use the cross-verification process. This computes all leading terms in five or less variables.

    For the final case of leading terms involving all six variables, we observe that the cross-verification algorithm yields two symmetric outputs of a degree smaller than the expected degree. We combine the two leading terms computed into a single leading term missing no monomials.

    We supply code computing all possible leading terms at \url{https://victoriaschleis.github.io/bookchapter}.
\end{proof}

\small
\begin{longtable}{ c  c  c   c  }
\toprule \textbf{Monomial} & \textbf{Leading Terms} & \textbf{Weights} & \makecell{\textbf{Cone }}\\ \midrule
\midrule

\makecell{$y^7, y^6z, y^5z^2,$\\$ y^4z^3,y^3z^4, $\\$ y^2z^5, yz^6,z^7$ }& \makecell{1}& $0$ & all\\ \midrule\midrule

$y^6$ & \makecell{$\beta_4^2\beta_6^2\beta_7^3$ \\{\color{black}$\beta_{56}^2\beta_6^2\beta_7^3$}  } & \makecell{$\omega_4+\omega_6+3\omega_7/2$ \\$\omega_{56}+\omega_6+3\omega_7/2$ } & \makecell{$(2,4)$\\$(1,3)$} \\ \midrule

$y^5z$ & \makecell{$\beta_4^2\beta_7^5$ \\$\beta_{56}^2\beta_7^5$} & \makecell{ $ \omega_4 + 5 \omega_7/2$\\ $ \omega_{56} + 5 \omega_7/2$} & \makecell{$(2,4)$\\$(1,3)$}\\ \midrule

$y^4z^2$ & $\beta_6^2\beta_7^5$ & $\omega_6+5\omega_7/2$ & all \\ \midrule

\makecell{$y^3z^3,y^2z^4,$\\$yz^5,z^6$ }& \makecell{$\beta_7^7$  }& $7\omega_7/2$ & all \\ \midrule \midrule

$y^5$ & \makecell{  $\beta_{56}^2\beta_6^8\beta_7^4 $  \\ $\beta_4^6\beta_7^8$ \\ $\beta_{56}^6\beta_7^8$ } & \makecell{$\omega_{56}+4\omega_6+2\omega_7 $  \\ $3\omega_4+4\omega_7$ \\ $3\omega_{56}+4\omega_7$ }& \makecell{$(1,2)$\\$(4)$\\$(3)$}
\\ \midrule

$y^4z$ & \makecell{ $\beta_{56}^2\beta_6^{6}\beta_7^6$ \\ $\beta_{4}^4\beta_6^2\beta_7^8$ \\ $\beta_{56}^4\beta_6^2\beta_7^8$   } & \makecell{ $\omega_{56}+3\omega_6+3\omega_7 $  \\ $2\omega_4+\omega_6+4\omega_7$ \\ $2\omega_{56}+\omega_6+4\omega_7$ } & \makecell{($1,2$) \\ ($4$) \\ ($3$) } \\ \midrule

\makecell{$y^3z^2, y^2z^3,$\\$ yz^4, z^5$} & \makecell{  $\beta_4^2\beta_6^4\beta_7^8$ \\ $\beta_{56}^2\beta_6^4\beta_7^8$  } & \makecell{ $\omega_4+2\omega_6+4\omega_7$ \\ $\omega_{56}+2\omega_6+4\omega_7$} & \makecell{$(2,4)$\\$(1,3)$}

\\ \midrule \midrule

$y^4$ & \makecell{$\beta_{34}^2\beta_{56}^2\beta_6^{10}\beta_7^7$ \\ $\beta_2^2\beta_{56}^2\beta_6^{10}\beta_7^7$\\$\beta_{34}^2\beta_{56}^4\beta_6^6\beta_7^9$ \\ $\beta_2^2\beta_{56}^4\beta_6^6\beta_7^9$ \\ $\beta_4^4\beta_{56}^2\beta_6^6\beta_7^9$  }&  \makecell{$\omega_{34}+\omega_{56}+5\omega_6+7/2\omega_7$ \\ $\omega_2+\omega_{56}+5\omega_6+7/2\omega_7$\\$\omega_{34}+2\omega_{56}+3\omega_6+9/2\omega_7$ \\ $\omega_2+2\omega_{56}+3\omega_6+9/2\omega_7$ \\ $2\omega_4+\omega_{56}+3\omega_6+9/2\omega_7 $ }& \makecell{($1A_{\delta_{<}}$,$1C_{\delta_{<}}$,$2A_{\delta_{<}}$,$2C_{\delta_{<}}$)  \\ ($1B_{\delta_{<}}$,$1D_{\delta_{<}}$,$2B_{\delta_{<}}$,$2D_{\delta_{<}}$) \\($3A_{\alpha_{>}}$,$3C_{\alpha_{>}}$,$4A_{\alpha_{>}}$,$4C_{\alpha_{>}}$) \\ ($3B_{\alpha_{>}}$,$3D_{\alpha_{>}}$,$4B_{\alpha_{>}}$,$4D_{\alpha_{>}}$)\\ ($\alpha_{<},\delta_{>}$)} \\ \midrule

$y^3z$ & \makecell{ $\beta_4^2\beta_{56}^2\beta_6^8\beta_7^9$ }
 & \makecell{$ \omega_4+\omega_{56}+4\omega_6+9/2\omega_7 $} & \makecell{all}\\ \midrule

$y^2z^2, yz^3, z^4$ & \makecell{$\beta_{56}^2\beta_6^{10}\beta_7^9$ }&\makecell{ $\omega_{56}+5\omega_6+9\omega_7/2$}& \makecell{all}

\\ \midrule \midrule

 $y^3$ & \makecell{ $\beta_2^4\beta_{56}^2\beta_6^{12}\beta_7^{10}$\\$\beta_{34}^4\beta_{56}^2\beta_6^{12}\beta_7^{10}$\\$\beta_2^2\beta_4^4\beta_{56}^2\beta_6^{10}\beta_7^{10}$\\$\beta_{34}^2\beta_4^4\beta_{56}^2\beta_6^{10}\beta_7^{10}$} & \makecell{ $2\omega_2+\omega_{56}+6\omega_6+5\omega_7$\\$2\omega_{34}+\omega_{56}+6\omega_6+5\omega_7$\\$\omega_2+2\omega_4+\omega_{56}+5\omega_6+5\omega_7$\\$\omega_{34}+2\omega_4+\omega_{56}+5\omega_6+5\omega_7$ } & \makecell{($D$)\\($C$)\\($B$)\\($A$)} \\ \midrule
 $y^2z, yz^2,z^3$ & \makecell{ $\beta_2^2\beta_4^2\beta_{56}^2\beta_6^{12}\beta_7^{10}$\\$\beta_{34}^2\beta_4^2\beta_{56}^2\beta_6^{12}\beta_7^{10}$} & \makecell{ $\omega_2+\omega_4+\omega_{56}+6\omega_6+5\omega_7$\\$\omega_{34}+\omega_4+\omega_{56}+6\omega_6+5\omega_7$ } & \makecell{($B,D$)\\($A,C$)} \\ \midrule\midrule
 $y^2$ & \makecell{ $\beta_2^2\beta_{34}^2\beta_4^4\beta_{56}^2\beta_6^{14}\beta_7^{11}$\\$\beta_2^4\beta_4^4\beta_{56}^4\beta_6^{12}\beta_7^{11}$\\$\beta_{34}^4\beta_4^4\beta_{56}^4\beta_6^{12}\beta_7^{11}$\\$\beta_2^2\beta_4^8\beta_{56}^2\beta_6^{12}\beta_7^{11}$\\$\beta_{34}^2\beta_4^8\beta_{56}^2\beta_6^{12}\beta_7^{11}$} & \makecell{$\omega_2+\omega_{34}+2\omega_4+\omega_{56}+7\omega_6+11/2\omega_7$\\$2\omega_2+2\omega_4+2\omega_{56}+6\omega_6+11/2\omega_7$\\$2\omega_{34}+2\omega_4+2\omega_{56}+6\omega_6+11/2\omega_7$\\$\omega_2+4\omega_4+\omega_{56}+6\omega_6+11/2\omega_7$\\$\omega_{34}+4\omega_4+\omega_{56}+6\omega_6+11/2\omega_7$ } & \makecell{ ($D_{\beta_{>}}$,$C_{\beta_{>}}$) \\ ($B_{\alpha_{>}}$,$D_{\beta_{<}}$)\\ ($A_{\alpha_{>}}$,$C_{\beta_{<}}$) \\ ($B_{\alpha_{<}}$) \\($A_{\alpha_{<}}$) } \\ \midrule
 $yz, z^2$ & \makecell{ $\beta_{34}^2\beta_4^6\beta_{56}^2\beta_6^{14}\beta_7^{11}$} & \makecell{$\omega_{34}+3\omega_4+\omega_{56}+7\omega_6+11/2\omega_7$ } & \makecell{all} \\
  \midrule \midrule
 $y,z$ & \makecell{ $\beta_2^2\beta_{34}^2\beta_4^8\beta_{56}^2\beta_6^{16}\beta_7^{12}$\\$\beta_2^4\beta_4^8\beta_{56}^4\beta_6^{14}\beta_7^{12}$\\$\beta_{34}^4\beta_4^8\beta_{56}^4\beta_6^{14}\beta_7^{12}$\\$\beta_2^2\beta_{34}^2\beta_4^{12}\beta_6^{14}\beta_7^{12}$\\$\beta_2^4\beta_4^{10}\beta_{56}^2\beta_6^{14}\beta_7^{12}$\\$\beta_{34}^4\beta_4^{10}\beta_{56}^2\beta_6^{14}\beta_7^{12}$} & \makecell{ $\omega_2+\omega_{34}+4\omega_4+\omega_{56}+8\omega_6+6\omega_7$\\$2\omega_2+4\omega_4+2\omega_{56}+7\omega_6+6\omega_7$\\$2\omega_{34}+4\omega_4+2\omega_{56}+7\omega_6+6\omega_7$\\$\omega_2+\omega_{34}+6\omega_4+7\omega_6+6\omega_7$\\$2\omega_2+5\omega_4+\omega_{56}+7\omega_6+6\omega_7$\\$2\omega_{34}+5\omega_4+\omega_{56}+7\omega_6+6\omega_7$} & \makecell{($\phi_{>}\gamma_<$)\\ ($1B,1D,3B,3D$)\\($1A,1C,3A,3C$)\\($2_{\phi_<\gamma_>}$, $4_{\phi_<\gamma_>}$)\\($2B_{\gamma_{<}}$,$2D_{\gamma_{<}}$,$4B_{\gamma_{<}}$,$4D_{\gamma_{<}}$)\\($2A_{\gamma_{<}}$,$2C_{\gamma_{<}}$,$4A_{\gamma_{<}}$,$4C_{\gamma_{<}}$)} \\ \bottomrule
\caption{Leading terms for monomials in the $yz$-projection. }\label{leading_terms_coneI}

\end{longtable}
\normalsize
\begin{wrapfigure}{r}{7cm}\label{picture_reembedding}
\includegraphics{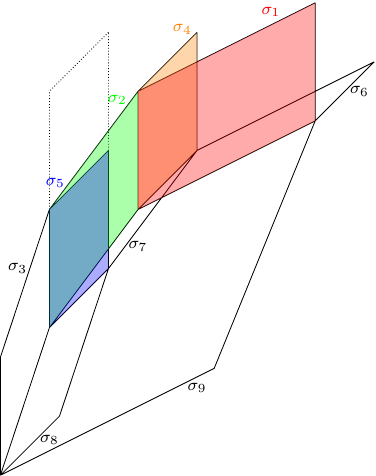}
\caption{The genereric re-embedding for $3$-thetas.}
\end{wrapfigure}

To analyze the projections, we to provide a geometric description of the re-embedding, as parallel cells need to be carefully analyzed.
For the re-embedding $f$ given in Theorem \ref{thm_3-thetas_faithful}, the tropical hypersurface Trop($V(z-f)$) consists of nine 2-dimensional cells, defined by the following systems of inequalities:

$\sigma_1 = \{ Z = X+A \geq Y, X \leq A-B \}$, 

$\sigma_2 = \{ Z = 2X + B \geq Y, B-C \geq X \geq A-B\}$, 

$\sigma_3 = \{ Z = 3X+C \geq Y, X \geq B-C \}$,

$\sigma_4 = \{  Z, Y \leq 2A-B, X = A-B\}$,

$\sigma_5 = \{ Z, Y \leq 3B -2C, X = B-C \}$,

$\sigma_6 = \{ Y = X+A \geq Z, X \leq A-B \}$,

$\sigma_7 = \{ Y = 2X+B \geq Z, A-B \leq X \leq B-C\}$,

$\sigma_8 = \{ Y = 3X+C \geq Z, B-C \leq X \}$,

$\sigma_9 = \{ Z = Y  \geq X+A,2X+B,3X+C
 \}$.

\begin{proposition}\label{prop: faithful_yz}
Using the re-embedding of Theorem \ref{thm_3-thetas_faithful}, all edges of the re-embedded tropical curve $\Trop(g)$ have tropical multiplicity one, i.e. the re-embedding yields a faithful tropicalization.
\end{proposition}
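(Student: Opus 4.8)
The plan is to run, for the $yz$-projection, the exact analogue of the argument carried out for the $xz$-projection in Lemma \ref{3lem_existence_cones_xz}, and then to glue the two projections together via Lemma \ref{lem_proj_lemma_genus_n}. The $yz$-projection is cut out by a single degree-$7$ polynomial $g_{yz}(y,z)$ whose coefficients are the homogeneous polynomials $p_{y^iz^j}$ described above. The preceding proposition has already identified, in each cone, the leading term of every $p_{y^iz^j}$ together with its weight, as recorded in Table \ref{leading_terms_coneI}. Reading $\val(p_{y^iz^j})$ off that table produces a height function on the lattice points of the Newton polygon of $g_{yz}$, and hence a regular subdivision. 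Since the tropical multiplicity of an edge of $\trop(V(g_{yz}))$ is the lattice length of the dual edge in this subdivision, the whole problem reduces to verifying that the relevant dual edges are primitive.

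First I would go cone by cone through the maximal cones appearing in the rightmost column of Table \ref{leading_terms_coneI}; these refine the $xz$-cones of Table \ref{defining_conditions_xz} by the five additional tropical dominance inequalities $\alpha_\lessgtr,\beta_\lessgtr,\gamma_\lessgtr,\delta_\lessgtr,\phi_\lessgtr$ of Remark \ref{lem tropical dominance conditions}. In each such cone the weights single out the lower hull of the lifted Newton polygon, and from the exponent vectors of the dominant leading terms one reads off the primitive direction of each dual edge and checks that no interior lattice point is hidden on it. The resulting Newton subdivisions, and the dual tropical curves whose edges carry the claimed multiplicity one, are collected in Figure \ref{table_newton_subdivs_yz}.

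Next I would assemble the global picture. By Lemma \ref{lem_proj_lemma_genus_n} the embedded curve $\Trop(g)$ inside the modification is determined by $\trop(V(g))$, the $xz$- and $yz$-projections, and one further projection onto a generic rational plane $P$. Using the explicit cell decomposition $\sigma_1,\dots,\sigma_9$ of $\Trop(V(z-f))$, I would match each edge of the $3$-theta skeleton to the cell of the modification in which it lies and to a projection that resolves it with lattice index one and image multiplicity one; the $xz$-analysis (Lemma \ref{3lem_existence_cones_xz}) and the $yz$-analysis above together supply multiplicity one on all the graph cells $\sigma_1,\sigma_2,\sigma_3,\sigma_6,\sigma_7,\sigma_8$, and the generic projection $P$ disambiguates the remaining edges. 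Having established multiplicity one on every edge of the $3$-theta, Lemma \ref{lem: cartify_faithful} yields faithfulness on the minimal Berkovich skeleton.

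The main obstacle is twofold. The bookkeeping is heavy: every maximal $xz$-cone splits further under the dominance inequalities, so there are many cones, each with its own list of leading terms and hence its own Newton subdivision, and primitivity of the dual edges must be checked uniformly across all of them. More delicate is the treatment of the cells of the modification that are parallel to coordinate directions, namely the two vertical cells $\sigma_4,\sigma_5$ and the top cell $\sigma_9$: these are precisely the places where a single coordinate projection can collapse two edges onto one another and thereby inflate an apparent multiplicity or hide a cycle. Handling them requires the additional generic projection $P$ of Lemma \ref{lem_proj_lemma_genus_n}, and verifying that a generic choice of $P$ separates the overlapping cells in every cone is the step I expect to be the crux. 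Because the $yz$-elimination does not terminate, one cannot fall back on a direct Newton-polygon computation, so the argument ultimately rests on the leading-term table being exhaustive, which in turn relies on the dominance conditions of Remark \ref{lem tropical dominance conditions} covering all admissible orderings of the valuations.
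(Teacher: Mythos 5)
Your overall strategy coincides with the paper's: read the Newton subdivision of the $yz$-projection off the leading-term data of Table \ref{leading_terms_coneI} cone by cone, combine with the $xz$-analysis of Lemma \ref{3lem_existence_cones_xz} via Lemma \ref{lem_proj_lemma_genus_n}, and conclude with Lemma \ref{lem: cartify_faithful}. You also correctly locate the crux in the cells of $\Trop(V(z-f))$ whose images overlap under a coordinate projection. The divergence is in how that crux is resolved. The paper never computes the fourth, generic projection onto $P$; instead it identifies the specific overlapping pairs ($\sigma_1$ with $\sigma_4$, $\sigma_1$ with $\sigma_5$, $\sigma_2$ with $\sigma_5$, and $\sigma_4$ with $\sigma_5$ --- note that the graph cells $\sigma_1,\sigma_2$ participate, not only the vertical cells and $\sigma_9$ as you suggest), classifies exactly four kinds of irregularity that can arise on their images (inflated push-forward multiplicities, parallelograms that may or may not be genuine vertices, edges whose multiplicities add because they overlap, and vertices landing in the relative interior of other cells), and resolves each one by matching edges of the $yz$-picture against the color-coded $xz$-picture in Table \ref{table_tropical_curves_xz}, reconstructing the curve in three dimensions as in Example \ref{ex: refined modification}. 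Your route through $P$ is sound in principle by Lemma \ref{lem_proj_lemma_genus_n}, but it leaves a real obstruction unaddressed: you yourself note that the $yz$-elimination already fails to terminate, and an elimination onto a generically tilted plane would be at least as hard, so "the generic projection $P$ disambiguates the remaining edges" is not a step you could actually carry out as stated. The paper's two-projection cross-referencing is precisely the device that makes the fourth projection unnecessary, and without it (or a substitute argument for why the overlaps resolve the way they do) your plan does not close.
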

\begin{proof}

 We observe that in the $yz$-projection, the pairs of cells $\sigma_1$ and $\sigma_4$, $\sigma_1$ and $\sigma_5$, $\sigma_2$ and $\sigma_5$, and $\sigma_4$ and $\sigma_5$ overlap in parts. Further, the behaviour on these cells cannot be directly observed on the $xz$-projection. Thus, we expect some of the edges in the projections to have higher multiplicities, and need to carefully analyze them to understand the multiplicities and structure of the re-embedded curve. All in all, there are four types of such irregularities that can occur:

\begin{enumerate}
\item Some multiplicity one edges or legs inherit higher multiplicities in the $yz$-tropica\-lization due to the push-forward formula for multiplicities, see \cite{Tev07}. Multiplicities inherited in this way are the black multiplicities in Figure \ref{table_newton_subdivs_yz}.
\end{enumerate}
On the images of the overlapping cells $\sigma_1$, $\sigma_2$, $\sigma_4$ or $\sigma_5$, the following further behaviour can occur:
\begin{enumerate}
    \item[(ii)] Parallelograms inside the Newton subdivision may indicate either a vertex (red in the subdivision) or not (marked in blue), depending on whether two edges or legs of $I_{g,f}$ that intersect in the $yz$-projection intersect in $I_{g,f}$ or not.
    \item[(iii)] Two edges or legs overlap in the $yz$-tropicalization and their multiplicities get added accordingly, indicated by dashed edges and sums of multiplicities on the curve in Figure \ref{table_newton_subdivs_yz}.
    \item[(iv)] Vertices lie in the relative interiors of edges or on another vertex. The vertices for which this can happen are the purple and the orange vertices in Table \ref{table_tropical_curves_xz} and Figure \ref{table_newton_subdivs_yz}.
\end{enumerate}
We resolve these issues by investigating the corresponding $xz$-projection for each case and combining the information from both projections. A color-coded analysis of each individual representative is given in Table \ref{table_tropical_curves_xz} for the $xz$-projection and at \url{https://victoriaschleis.github.io/bookchapter} for the $yz$-projection.Edges that are the same color in the projections correspond to each other. See Example \ref{ex: refined modification}, where a three dimensional reconstruction is carried out in full for the cone $\mathcal{C}_0$. 
\end{proof}

\begin{figure}
\begin{longtable}{c c}
    \includegraphics[scale = 0.85]{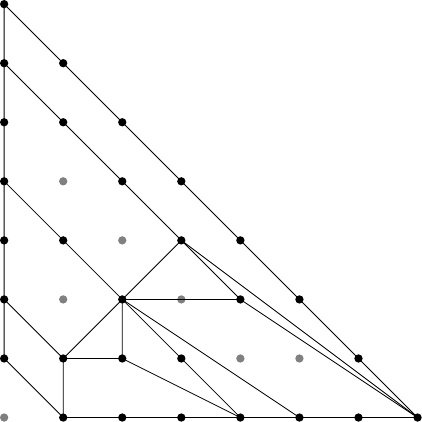} & \includegraphics[scale = 0.5]{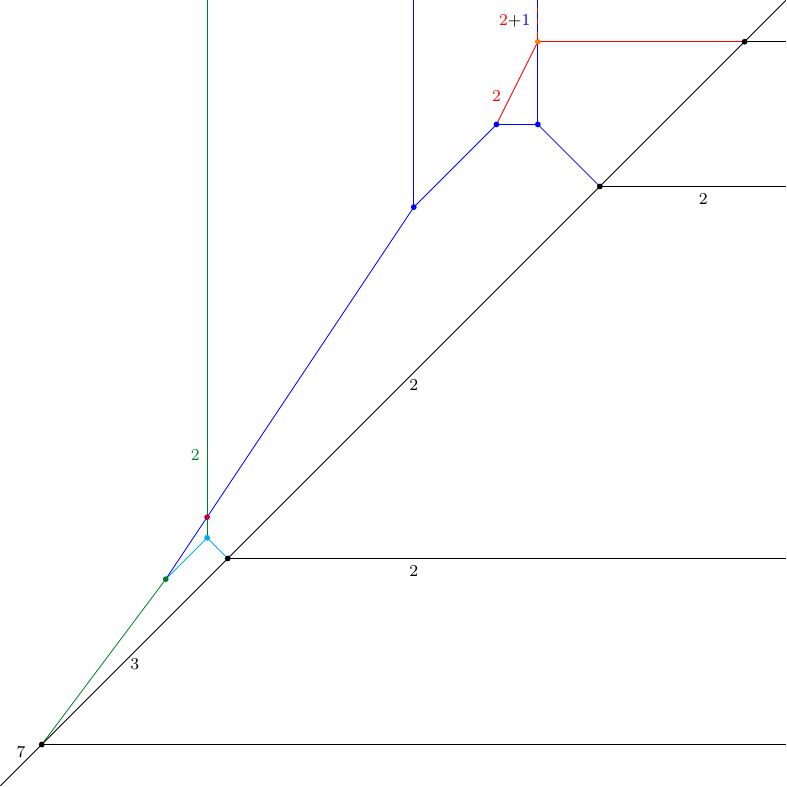}\\
\end{longtable}
    \caption{The $yz$-projection in Example \ref{ex: refined modification}. The tropical curve is a re-drawing of the \textsc{Oscar} \cite{Oscar} output to include colors.}
    \label{table_newton_subdivs_yz}
\end{figure}

\begin{example}\label{ex: refined modification}
    We give an example of a re-embedded curve in Cone $\mathcal{C}_{1A_{\alpha_<\delta_>}}$ in Figure \ref{table_newton_subdivs_yz}. The colors of edges and vertices coincide with the colors in the $xz$- and $yz$-projections in Table \ref{table_tropical_curves_xz} and Figure \ref{table_newton_subdivs_yz}.
\end{example}

\section{Gluing re-embeddings}\label{sec_globalizing}

In the previous section we have seen how to locally modify the tropical hyperelliptic curve according to specific building blocks of the associated Berkovich skeleton. This allows us to faithfully re-embed in the case of 3-thetas and 3-points. But what happens if the Berkovich skeleton consists of multiple building blocks?

In this section, we show how to glue the constructed re-embeddings of building blocks to a re-embedding that works for the whole curve.

\begin{remark}
 \label{lem_bridges_faithful}
First, let us assume that we are considering two building blocks $B_1$ and $B_2$ connected by a point connector $p$, and that we have a re-embedding that is faithful on both $B_1$ and $B_2$. Then, the re-embedding is already faithful on the minimal Berkovich skeleton.
Now, we assume that both building blocks are connected by a bridge $e_i$. The coefficient condition of bridges gives rise to a  dual edge of length one, as can be observed in Table \ref{table_standard_trop_coeff_cond}. Therefore, $e_i$ has tropical multiplicity one and  is faithful on every subgraph of the Berkovich skeleton that contains it. Thus, if the tropicalizations on both sides of the $e_i$ are faithful on respective subgraphs of the Berkovich skeleton connected by $e_i$ in the Berkovich skeleton, the whole curve is faithful on the union.
\end{remark}
The second piece of the gluing puzzle is the ability to shift re-embeddings: The modifications given in Section \ref{sec_local_faithful} work for the first monomial in the building block being $x$, but multiple building blocks will necessarily start at different monomials.
The next proposition solves just that: multiplying the modification polynomial by the correct monomial at which the building block starts preserves faithfulness.

\begin{lemma}\label{prop_shifting_building_blocks}
Let $\chi$ be an irreducible hyperelliptic curve given by the defining equation
$g(x,y) := y^2 - h(x) = 0$ and let $\tilde{\chi}$ be another curve given by the defining equation $\tilde{g}(x,y):= y^2 - x^{2l+1}h(x) = 0.$ Further, let $f \in K[x,y]$ be a modification polynomial such that  the tropicalization induced by the ideal $I_{g,f} = \langle g, z-f \rangle \in K[x^{\pm}, y^{\pm}, z^{\pm}]$ is faithful. Then, for the modification polynomial $\tilde{f}(x,y) = y - x^{l-1}f \in K[x,y]$, the tropicalization induced by the ideal $I_{\tilde{g},\tilde{f}} = \langle \tilde{g}, z-\tilde{f} \rangle \in K[x^{\pm}, y^{\pm}, z^{\pm}]$ is faithful.
\end{lemma}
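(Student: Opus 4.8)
The plan is to deduce faithfulness of $I_{\tilde g,\tilde f}$ from that of $I_{g,f}$ by producing an explicit integral-linear identification of the two re-embedded tropical curves and then invoking the combinatorial faithfulness criterion of Lemma \ref{lem: cartify_faithful}. By that criterion it suffices to exhibit inside $\Trop(V(I_{\tilde g,\tilde f}))$ an isometric copy of the relevant subgraph $\Sigma$ of the Berkovich skeleton on which every edge has tropical multiplicity one and every vertex is trivalent. Since exactly this data is supplied for $I_{g,f}$ by hypothesis, the whole problem reduces to transporting the multiplicities, valences, and lattice edge-lengths of the relevant part of $\Trop(V(I_{g,f}))$ over to $\Trop(V(I_{\tilde g,\tilde f}))$.

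First I would introduce the monomial substitution $y = x^{l}\,w$ on the torus, together with the rescaling $z = x^{l-1}z'$ of the modification coordinate. On Laurent polynomials this carries $\tilde g = y^2 - x^{2l+1}h(x)$ to $x^{2l}\bigl(w^2 - x\,h(x)\bigr)$ and transports the relation $z-\tilde f$ to the modification relation of the curve $w^2 = x\,h(x)$; tropically the unit factor $1-x^{l-1}$ occurring in $\tilde f = y - x^{l-1}f$ is dominated by $1$ and hence does not affect any initial form. The induced map on $\R^3$ is the unimodular shear whose matrix has rows $(1,0,0)$, $(l,1,0)$ and $(l-1,0,1)$; being a lattice isomorphism it is an isometry preserving tropical edge multiplicities and vertex valences. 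Thus $\Trop(V(I_{\tilde g,\tilde f}))$ is the unimodular image of the re-embedded tropical curve of $w^2 = x\,h(x)$, and faithfulness of the one is equivalent to faithfulness of the other.

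It then remains to compare the re-embedded curve of $w^2 = x\,h(x)$ with that of $\chi\colon y^2 = h(x)$. I would do this projection by projection using Lemma \ref{lem_proj_lemma_genus_n}: the single extra factor of $x$ changes the defining polynomial of each of the $xz$-, $yz$- and generic projections only by multiplying the $h$-part by one power of $x$, which translates the corresponding lattice points and leaves the local Newton subdivision around the cells dual to $\Sigma$ combinatorially unchanged. Because $\Sigma$ is supported at the valuations $\omega_i=\val(\alpha_i)$ of the roots of $h$ and not at $x=0$, the cells dual to $\Sigma$ are merely translated, so their multiplicities remain one and the incident vertices remain trivalent; the only genuinely new feature contributed by the extra root at the origin is a point building block lying to the left of $\Sigma$ and disjoint from it.

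The step I expect to be the main obstacle is precisely this last comparison. The exponent $2l+1$ is odd, so $y=x^l w$ strips off only its even part and one factor of $x$ unavoidably remains, which no monomial (torus) isomorphism can remove — indeed $y^2=h(x)$ and $w^2 = x\,h(x)$ need not even be abstractly isomorphic. The real work is therefore to verify, region by region in the Newton subdivision, that this residual factor of $x$ (together with the tropically-negligible unit $1-x^{l-1}$) perturbs only the portion of the tropical curve near $x=0$ and never merges cells, raises an edge multiplicity, or increases a valence along $\Sigma$. Once this localization is in place, the multiplicity-one and trivalence conditions on $\Sigma$ transfer verbatim and the lemma follows.
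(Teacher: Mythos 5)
Your overall strategy --- transport the multiplicity-one and trivalence data of $\Trop(V(I_{g,f}))$ to $\Trop(V(I_{\tilde g,\tilde f}))$ and then quote Lemma \ref{lem: cartify_faithful} --- is the right one, and the unimodular substitution $y=x^l w$, $z=x^{l-1}z'$ is a legitimate move: monomial isomorphisms of the torus induce unimodular linear maps on tropicalizations and these preserve lattice lengths, valences and multiplicities. But the argument is not complete, and you flag the hole yourself: after the substitution you are left comparing $y^2=h(x)$ with $w^2=x\,h(x)$, and you describe the verification that the residual factor of $x$ perturbs only the part of the curve near the origin as ``the real work,'' to be carried out ``region by region,'' without carrying it out. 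That verification is not a routine afterthought; it is the lemma again in the case of a single extra factor of $x$ on the right-hand side, so the reduction replaces the statement by a special case of itself, and the core difficulty --- that no cell dual to $\Sigma$ is merged, no edge multiplicity raised, no valence increased --- is never discharged. Two smaller points: the claim that $1-x^{l-1}$ is tropically dominated by $1$ holds only where $\val(x)>0$, which would need to be justified on the relevant cells (in fact the intended $\tilde f$ multiplies only the non-$y$ part of $f$ by the monomial, cf.\ Theorem \ref{thm_mainthm_bridges} and Table \ref{table_reembedding}, so this factor should not arise at all).

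For comparison, the paper's proof never tries to identify the two curves and so never meets the odd-exponent obstruction. It compares the \emph{extended} Newton polytopes of $f$ and $\tilde f$ (Newton polytopes with the negative valuation of each coefficient attached as a last coordinate), observes that every vertex of the latter corresponds under $x^i\mapsto x^{i+l}$ to a vertex of the former with the same valuation, concludes that the induced regular subdivisions are congruent under this shift, asserts the analogous congruences for the projections of Lemma \ref{lem_proj_lemma_genus_n}, and notes that lattice lengths --- hence multiplicities --- are unchanged, so Lemma \ref{lem: cartify_faithful} applies. If you want to keep your route, the missing step must be supplied in exactly that style: a Newton-subdivision argument showing that the extra lattice point contributed by the additional root at the origin only refines the subdivision in cells dual to a region disjoint from $\Sigma$.
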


\begin{proof}
Let $\mathcal{P}$ be the Newton polytope of $f$ with the negative valuation of every monomial corresponding to a point attached as a last coordinate, and let $\Tilde{\mathcal{P}}$ be the extended Newton polytope of $\Tilde{f}$. Then, for every vertex $\tilde{v}$  in $\tilde{\mathcal{P}}$ corresponding to the monomial $m$ there is a vertex $v$ in $\mathcal{P}$ corresponding to $mx^{-l}$  with the same valuation. Thus, $\mathcal{P}$ and $\tilde{\mathcal{P}}$ yield congruent subdivisions on their respective Newton polytopes $\mathcal{N}$ and $\tilde{\mathcal{N}}$ under the coordinate shift $x^i \mapsto x^{i+l}$. Analogous computations hold for the projections of their respective re-embeddings. Further, lattice lengths stay constant under this shift. Thus, $\tilde{g}$ only has multiplicity one edges in the re-embedding along $\tilde{f}$ by construction and is thus faithful by Lemma \ref{lem: cartify_faithful}.
\end{proof}

Combining the results from Section \ref{sec_local_faithful} and the shifting from Lemma \ref{prop_shifting_building_blocks}, we obtain the re-embedding polynomials for building blocks in Table \ref{table_reembedding}.

  \begin{table}[htbp]
\centering
\begin{tabular}{ccc}
\toprule 
\makecell{\textbf{Subgraph}\\ \textbf{at} $x^i$}& \makecell{\textbf{Local}\\ \textbf{Berkovich} \\ \textbf{skeleton}} & \makecell{\textbf{Re-embedding polynomial}} \\ \midrule \midrule

\makecell{$3$-Theta }& \begin{minipage}{2.5cm}\centering

\begin{tikzpicture}
            \draw(1,1.5) arc (90:-90:0.5);

            \draw(0,0.5) arc (90:-90:-0.5);
            \draw(0,0.5) -- (1,0.5);
            \draw(0,1.5) -- (1,1.5);
            \draw(0,1.5) -- (0,0.5);
            \draw(1,1.5) -- (1,0.5);
            \fill (0,0.5) circle (1.5pt);
            \fill (0,1.5) circle (1.5pt);
            \fill (1,0.5) circle (1.5pt);
            \fill (1,1.5) circle (1.5pt);
          \end{tikzpicture}

\end{minipage}
&\makecell{
$y - (\sqrt{(\prod_{i = 3}^{7} \alpha_i)} x + \sqrt{(\alpha_5\alpha_6\alpha_7)} x^2 - \sqrt{\alpha_{7}}x^3)$}  \\ \midrule
\makecell{$2$-Theta }& \begin{minipage}{2.5cm}\centering

\begin{tikzpicture}
            \draw(0,1.5) arc (90:-90:0.5);

            \draw(0,0.5) arc (90:-90:-0.5);
            \draw(0,1.5) -- (0,0.5);
            \fill (0,0.5) circle (1.5pt);
            \fill (0,1.5) circle (1.5pt);
          \end{tikzpicture}

\end{minipage}
& \makecell{$y- \sqrt{x^{i-1}}\cdot(\sqrt{-\alpha_3\alpha_4\alpha_5}x - \sqrt{-\alpha_5}x^2$)  }  \\ \midrule
\makecell{Cycle}& \begin{minipage}{2.5cm}\centering

\begin{tikzpicture}
            \draw(0,1.5) arc (90:-90:0.5);

            \draw(0,0.5) arc (90:-90:-0.5);
          \end{tikzpicture}

\end{minipage}
& \makecell{ $ y- \sqrt{x^{i-1}}\cdot \sqrt{-\alpha_3}x$}
\\ \bottomrule\end{tabular}
\caption{Re-embedding polynomials for the building blocks. The $\alpha_i$ are the coefficients in the defining equation of the hyperelliptic curve $\chi$, see \eqref{eq_defining}, and the $i$ is the exponent of the first monomial contributing to the cycle or 2-theta, as in Table \ref{table_standard_trop_coeff_cond}. \label{table_reembedding}}\end{table}

Finally, we can combine all the pieces constructed in the last Sections to obtain our main result, Theorem \ref{thm-main}.
\begin{theorem}\label{thm_mainthm_point_connectors_minimal_berkovich}
Let $\chi$ be an irreducible hyperelliptic curve of genus three defined by an equation $g$ as in Equation \ref{eq_defining}. The embeddding of $\chi$ in the ideal given by modifications along hyperplanes generated by $z_i - f_i(x,y)$, where $f_i(x,y) = y - x^l(a_1x - a_2x^2 - \dots - a_{k_i}x^{k_i})$ for suitable $l$ and $a_i$ given in Table \ref{table_reembedding} by considering $I_{g,f} = \langle g, z_1-f_1, \dots  , z_m - f_m \rangle \in K[x^{\pm}, y^{\pm}, z_1^{\pm}, \dots, z_m^{\pm}]$ is faithful.
\end{theorem}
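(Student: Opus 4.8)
The plan is to build the global faithful re-embedding by gluing together the local faithful re-embeddings of the individual building blocks, using the skeleton decomposition, the shifting Lemma \ref{prop_shifting_building_blocks}, and the gluing principle of Remark \ref{lem_bridges_faithful}. First I would invoke Lemma \ref{lem_hyperell_involution} to write the minimal Berkovich skeleton of $\chi$ as a gluing of building blocks from Table \ref{table_standard_trop_coeff_cond}. Since $\chi$ has genus three and the connectors (bridges and point connectors) carry no genus, the first Betti numbers of the genus-carrying blocks must sum to three; this leaves only the possibilities of a single $3$-theta, a $2$-theta together with a cycle, or three cycles, all joined by bridges and point connectors. For each genus-carrying block I would then quote the corresponding local faithfulness statement: Theorem \ref{thm_3-thetas_faithful} for the $3$-theta, and the re-embedding polynomials of \cite{CM2016, CM2019} recalled in the remark preceding Theorem \ref{thm_3-thetas_faithful} for cycles and $2$-thetas.

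The second ingredient is repositioning. The local re-embeddings are constructed under the normalization that the block begins at the monomial $x$, whereas in a decomposition all but the leftmost block begin at some higher power $x^{l}$. Here I would apply Lemma \ref{prop_shifting_building_blocks}: multiplying the modification polynomial of a block by the appropriate power of $x$ induces a congruent subdivision of the relevant Newton polytopes and preserves all lattice lengths, hence preserves the multiplicity-one and three-valence properties. This is precisely the factor $x^{l}$ appearing in the polynomials $f_i$ of the statement, and the resulting $f_i$ are exactly the entries of Table \ref{table_reembedding} shifted to the correct starting monomial.

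With the $f_i$ in hand, I would assemble the ideal $I_{g,f} = \langle g, z_1 - f_1, \dots, z_m - f_m \rangle$, attaching one modification coordinate per genus-carrying block, and conclude faithfulness by gluing. By Remark \ref{lem_bridges_faithful}, a bridge has a dual edge of lattice length one, so it is automatically faithful, and faithfulness on the two subgraphs it separates implies faithfulness on their union; the same holds for point connectors. Iterating over the (finitely many) bridges and connectors in the decomposition then upgrades block-by-block faithfulness to faithfulness on the whole minimal Berkovich skeleton, which by Lemma \ref{lem: cartify_faithful} is exactly the multiplicity-one, three-valent condition we need.

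The main obstacle I anticipate is justifying that the \emph{simultaneous} presence of all the modification coordinates $z_i$ does not spoil the faithfulness established for each block in isolation. The local analyses (in particular that behind Theorem \ref{thm_3-thetas_faithful}) treat a single modification $z - f$, whereas the global ideal modifies along all the $F_i$ at once. The key point to make rigorous is a localization statement: the tropical hypersurface of $z_i - f_i$ meets the tropical curve only over the range of monomials belonging to block $i$, and these ranges are separated by the multiplicity-one bridges, so in a neighborhood of each block only the matching modification is active. One then checks, via the projection analysis of Lemma \ref{lem_proj_lemma_genus_n} applied block-by-block, that the cross terms between distinct $f_i$ create neither new higher-multiplicity edges nor coincidences of vertices along the skeleton; this reproduces the local picture within the global re-embedding and completes the proof.
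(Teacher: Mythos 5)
Your proposal follows essentially the same route as the paper: decompose the skeleton into building blocks via Table \ref{table_standard_trop_coeff_cond}, invoke the local faithfulness results of Section \ref{sec_local_faithful} together with the shifting Lemma \ref{prop_shifting_building_blocks}, and glue across bridges and point connectors using Remark \ref{lem_bridges_faithful}. The one step you flag as an anticipated obstacle --- that the simultaneous presence of all modification coordinates does not spoil the local analyses --- is resolved in the paper exactly along the lines you sketch, namely by a projection argument: since the variable sets of the $z_i - f_i$ are disjoint, eliminating the other blocks' variables from $I_{g,f_i,f_{i+1}, k_1, k_2}$ recovers each $I_{g,f_i, k_1}$, so the isometric multiplicity-one copies of the blocks in the individual re-embeddings lift to edges of the same length and multiplicity one in the combined re-embedding.
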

\begin{proof}
First, we separate the defining equation $g$ into its different components that identify the different (possibly shifted) building blocks using the coefficient conditions in Table \ref{table_standard_trop_coeff_cond}.
In Section \ref{sec_local_faithful}, we showed that for the first building block of the Berkovich skeleton of $\chi$, this is a suitable faithful re-embedding regardless of the type of the building block. By applying Proposition \ref{prop_shifting_building_blocks}, we see that this is true for all building blocks contained in the tropical curve. For two connected adjacent building blocks carrying genus, we proceed pairwise by elimination of variables:

We denote by $ I_{g,f_i, k_1}$ the re-embedding along $f_i$, by $I_{g,f_{i+1}, k_2}$ the re-embedding along $f_{i+1}$ and by $I_{g,f_i,f_{i+1}, k_1, k_2}$ the re-embedding along both $f_i$ and $f_{i+1}$. Since the two sets of variables of the re-embedding are disjoint, eliminating the variables $z_1, \dots, z_m$ we obtain $ I_{g,f_i,f_{i+1}, k_1, k_2} \cap K[x^{\pm}, y^{\pm}, w_1^{\pm}, \dots, w_{\tilde{m}}^{\pm}] = I_{g,f_{i+1}, k_2}. $
Analogously, we obtain that $I_{g,f_i,f_{i+1},k_1, k_2} \cap K[x^{\pm}, y^{\pm}, z_1^{\pm}, \dots, z_{m}^{\pm}] = I_{g,f_i, k_1}$. Since $ I_{g,f_i, k_1} \subset I_{g,f_i, f_{i+1}, k_1, k_2}$ and $ I_{g,f_{i+1}, k_2} \subset I_{g,f_i,f_{i+1}, k_1, k_2}$, all edges that appear in $ I_{g,f_i, k_1}$ and $ I_{g,f_{i+1}, k_2} $ can be identified with an edge of the same length in $I_{g,f_i,f_{i+1}, k_1, k_2}$. Thus, as $ I_{g,f_i, k_1}$  and $ I_{g,f_{i+1}, k_2} $  contain isometric copies of their respective building blocks, $I_{g,f_i,f_{i+1}, k_1, k_2}$ does as well.

By assumption, all multiplicities of edges in $I_{g,f_i, k_1}$ and $I_{g,f_{i+1}, k_2}$ corresponding to isometric copies of the building blocks of genus $k_1$ and $k_2$ are one. Thus, all multiplicities of edges in $I_{g,f_i,f_{i+1}, k_1, k_2}$ corresponding to isometric copies of the building blocks of genus $k_1$ and $k_2$ are one. Then, by Remark \ref{lem_bridges_faithful}, the tropicalization induced by  $I_{g,f_i,f_{i+1}, k_1, k_2}$ is faithful on both building blocks and their connector.
Thus, we obtain a faithful tropicalization of $\chi$.
\end{proof}

\begin{example}[Partial modification of two cycles]\label{ex_partial_mod_2_1_thetas}

Consider the curve $g(x,y) = y^2 - x(x-t^2)(x-t^4)(x-t^6)(x-t^8).$ All valuations of the monomials in $x$ are distinct. Using the coefficient conditions in Table \ref{table_standard_trop_coeff_cond}, we obtain that $g$ consists of two cycles, separated by a bridge.

First, we modify at the lower edge in direction $z$ with $f = y - \sqrt{t^{2}t^4t^6}x = y - t^6x$. Then, we modify at the upper edge using $\tilde{f} = y - \sqrt{t^2}x^2 = y-tx^2$, and obtain a faithful tropicalization. The resulting modification is depicted on the left in Example \ref{ex_1-thetas_reduced-dim}.
\end{example}

\subsection{Reducing dimensions}\label{sec_shrinking_dimensions}
We have a faithful re-embedding now, but we can do better, for curves whose Berkovich skeleton contains bridges: we can systematically cut down ambient dimension by combining multiple re-embedding polynomials into one.
\begin{theorem}\label{thm_mainthm_bridges}
Let $\chi$ be an irreducible hyperelliptic curve given by a defining equation $g$ as in Equation (\ref{eq_defining}) consisting of two $k$-thetas of genus $k_1$ and $k_2$ respectively, connected by a bridge. Let $I_{g,\hat{f}, k_1} = \langle g, z_1-\hat{f} \rangle \in K[x^{\pm}, y^{\pm}, z_1^{\pm}]$ and $I_{g,\tilde{f}, k_2} = \langle g, w_1-\tilde{f}_1 \rangle \in K[x^{\pm}, y^{\pm}, w_1^{\pm}]$ be the ideals determined in Section \ref{sec_local_faithful} for which the embedding is faithful, where $\hat{f} = y - \hat{h}(x)$ and $\tilde{f}= y-x^{k_1 + 1}\tilde{h}(x)$, obtained from Proposition \ref{prop_shifting_building_blocks}. Let $f = y -\hat{h}(x) - x^{k_1 + 1}\tilde{h}(x)$.

Then, the ideal $I_{g,f} = \langle g, z_1-f \rangle \in K[x^{\pm}, y^{\pm}, z_1^{\pm}]$ induces a faithful tropicalization.
\end{theorem}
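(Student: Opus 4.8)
The plan is to analyze the single re-embedding through its coordinate projections, exploiting the fact that the supports of the two summands of $f$ live in disjoint windows of $x$-degrees. The block modifications are $\hat f = y - \hat h(x)$ with $\deg \hat h \le k_1$ and $\tilde f = y - x^{k_1+1}\tilde h(x)$, so in $f = y - \hat h(x) - x^{k_1+1}\tilde h(x)$ the polynomial $\hat h$ occupies the $x$-degrees $1,\dots,k_1$ while $x^{k_1+1}\tilde h$ occupies degrees $\ge k_1+1$, the two windows meeting exactly at the bridge. By Lemma \ref{lem_proj_lemma_genus_n} the tropical curve $\Trop(V(I_{g,f}))$ is determined by the naive tropicalization $\Trop(V(g))$ together with the $xz$-, the $yz$-, and one generic projection, so I would verify faithfulness on each of these. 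Note that a shortcut via a torus projection from the two-coordinate embedding of Theorem \ref{thm_mainthm_point_connectors_minimal_berkovich} is unavailable, since the relation $f = \hat f + \tilde f - y$ is additive rather than monomial, which is precisely why the projection analysis seems unavoidable.

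For the $xz$-projection I would expand $g_{xz}(x,z) = \big(z + \hat h(x) + x^{k_1+1}\tilde h(x)\big)^2 - h(x)$ and compare it with the two block projections $\hat g_{xz} = (z+\hat h)^2 - h$ and $\tilde g_{xz} = (z + x^{k_1+1}\tilde h)^2 - h$, which are already known to be faithful on $B_1$ and on $B_2$ by Section \ref{sec_local_faithful}. The difference $g_{xz} - \hat g_{xz}$ is divisible by $x^{k_1+1}\tilde h$, hence carries both strictly larger $x$-degree and, by the bridge conditions $\omega_{i-1} < \omega_i < \omega_{i+1}$ of Table \ref{table_standard_trop_coeff_cond}, strictly larger coefficient valuations; it therefore does not contribute to the induced regular subdivision of the Newton polytope over the first-block window, so over that window the subdivision of $g_{xz}$ agrees with that of $\hat g_{xz}$. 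Symmetrically, over the second-block window it agrees with $\tilde g_{xz}$. The same degree-separation argument transports to the $yz$-projection and to the generic projection, whose subdivisions likewise split along the bridge into the pieces supplied by $\hat f$ and $\tilde f$. Assembling the four projections via Lemma \ref{lem_proj_lemma_genus_n} then reproduces an isometric copy of $B_1$ and of $B_2$, each with multiplicity-one edges and trivalent vertices, while the bridge contributes a length-one dual edge and hence a multiplicity-one connecting edge by Remark \ref{lem_bridges_faithful}.

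I expect the genuine obstacle to be the behaviour of the cross term $2\,\hat h(x)\,x^{k_1+1}\tilde h(x)$ appearing in the constant ($z^0$) coefficient of $g_{xz}$, since its degree range overlaps both block windows and, unlike the pure squares, it pairs a low-valuation first-block coefficient with a high-valuation second-block coefficient. The delicate point is that the faithful block modifications are engineered so that the top-degree parts of $x^{2(k_1+1)}\tilde h^2 - h$ cancel, and I must verify that adding the cross term neither destroys this cancellation inside either window nor lowers any coefficient valuation enough to create a higher-multiplicity edge; instead it should contribute precisely in the bridge window and realize the connecting edge. This is exactly the analogue of the case analysis carried out for $3$-thetas in Theorem \ref{thm_3-thetas_faithful} and Proposition \ref{prop: faithful_yz}, but here it simplifies because the two summands of $f$ have strictly disjoint $x$-degree windows, confining the overlap to the bridge. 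Once every edge is shown to have tropical multiplicity one, faithfulness on the minimal Berkovich skeleton follows from Lemma \ref{lem: cartify_faithful}.
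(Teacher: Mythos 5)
Your route is genuinely different from the paper's. You work at the level of the eliminated projections, expanding $g_{xz}=(z+\hat h+x^{k_1+1}\tilde h)^2-h(x)$ and arguing that its Newton subdivision splits into the two blocks' subdivisions by degree separation. The paper instead never eliminates: it first invokes Theorem \ref{thm_mainthm_point_connectors_minimal_berkovich} for the two-variable re-embedding, then compares the modification along $F=\trop(f)$ with the modifications along $\hat F$ and $\tilde F$ \emph{cell by cell}. Away from a single new $2$-cell $C$ (the purple cell in Figure \ref{fig: modifications}), the modification along $F$ coincides with that along $\hat F$ before the bridge and along $\tilde F$ after it, so faithfulness on each theta is inherited verbatim from the block results; on $C$ itself one only checks that no edge of the curve lies there and that the surrounding edges already have multiplicity one, which follows from Remark \ref{lem_bridges_faithful} and Lemma \ref{lem_proj_lemma_genus_n}. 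The advantage of working with $f$ rather than $g_{xz}$ is that the disjoint-support argument is clean for $f$: before the bridge the terms of $x^{k_1+1}\tilde h$ are tropically negligible in $z-f$, so the relevant initial ideals of $I_{g,f}$ and $I_{g,\hat f}$ literally agree, and no coefficient comparison in the eliminated polynomial is needed.

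This points to the gap in your version, which sits exactly where you flag it but is larger than the single cross term you isolate. The degree-window separation holds for $f$ but does \emph{not} transfer to $g_{xz}$: the difference $g_{xz}-\hat g_{xz}=2(z+\hat h)\,x^{k_1+1}\tilde h+x^{2(k_1+1)}\tilde h^2$ is supported in $x$-degrees $\geq k_1+2$, while the first block's dual cells in the Newton subdivision of $\hat g_{xz}$ occupy $x$-degrees up to roughly $2k_1+2$ (the first $2k_1+1$ roots of $h$), so the windows overlap and the claim that the subdivision over the first block is untouched does not follow from degree reasons alone --- it requires comparing the valuations of the perturbed coefficients against those of $\hat g_{xz}$, using the bridge condition $\omega_{i-1}<\omega_i<\omega_{i+1}$. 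You defer exactly this verification (``I must verify that adding the cross term neither destroys this cancellation \dots''), and it is the entire content of the statement in this approach; without it the proposal is a plan, not a proof. If you want to keep the projection route, the fix is to run the comparison one level up: show that for every weight vector $w$ lying over the region before (resp.\ after) the bridge, $\init_w(z-f)=\init_w(z-\hat f)$ (resp.\ $=\init_w(z-\tilde f)$), so the two re-embedded tropical curves agree there with multiplicities, and then handle the transition region separately --- which is precisely the paper's cell $C$ argument in disguise.
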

\begin{proof}
By Theorem \ref{thm_mainthm_point_connectors_minimal_berkovich},  $I_{g,f, k_1, k_2} = \langle g, z_1-\hat{f}, w_1-\tilde{f} \rangle \in K[x^{\pm}, y^{\pm}, z_1^{\pm}, w_1^{\pm}]$ induces a faithful tropicalization on the whole Berkovich skeleton.

Let $F=\trop(f)$. As $\hat{F}= \trop(\hat{f})$ and $\tilde{F}= \trop(\tilde{f})$ pass through the respective multiplicity two edges of the thetas in the standard $xy$-tropicalization of $g$ by Section \ref{sec_reembeddings_and_blocks}, so does $F$ by its construction.

Let $C$ be the additional cell in $F$ that is not contained in any cell of $\hat{F}$ or $\tilde{F}$. (For an example, see Example \ref{ex_1-thetas_reduced-dim})

Outside of $C$, the modification agrees with  $I_{g,\hat{f}, k_1}$ before the bridge and $I_{g,\tilde{f}, k_2}$ after the bridge respectively. They are thus faithful on the two thetas as they lie on distinct cells. Hence, by Remark \ref{lem_bridges_faithful}, the modification is faithful on the minimal Berkovich skeleton.
By construction, there is no edge or leg on $C$. Further, all legs and edges of the projected curve around $C$ have multiplicity one by Remark \ref{lem_bridges_faithful}. By Lemma \ref{lem_proj_lemma_genus_n}, this extends to the re-embedding.  Thus, the modification is faithful on $C$ as well.
\end{proof}
\begin{remark}\label{thm_mainthm_topdim_cones}
If the Berkovich skeleton of $\chi$ consists only of thetas and bridges (i.e. $\chi_{trop}$ is in a \emph{top-dimensional cone of the moduli space $M_{3,n}^{trop}$}), the tropicalization of $\chi$ can be faithfully modified in dimension three by re-embedding Trop($\chi$) into a tropical hypersurface obtained as the tropicalization of $V(z- (y - a_1x - a_2x^2 - \dots + a_7x^7) \in K[x,y]$.
\end{remark}

\begin{example}[Global tropical modification of two cycles]\label{ex_1-thetas_reduced-dim}
Consider the curve $g(x,y) = y^2 - x(x-t^2)(x-t^4)(x-t^6)(x-t^8)$ from Example \ref{ex_partial_mod_2_1_thetas}. By using reducing dimension using Theorem \ref{thm_mainthm_topdim_cones}, we can further simplify Example \ref{ex_partial_mod_2_1_thetas} while preserving faithfulness. By modifying $\R^2$ at $f = y- t^6x - tx^2$, we obtain a re-embedding that is faithful on the tropical curve, see Figure \ref{fig: modifications}.
\end{example}

\begin{figure}
    \centering
\begin{tabular}{c c}
\includegraphics{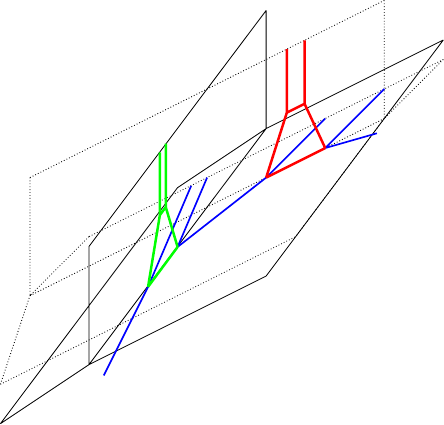}
&\includegraphics{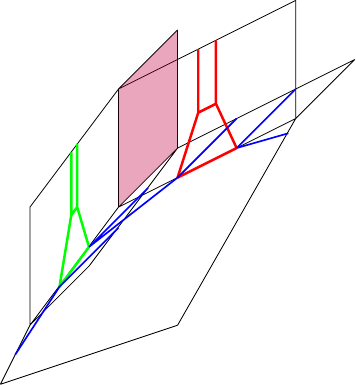}
\end{tabular}
    \caption{
The picture on the left shows the modification of Example \ref{ex_partial_mod_2_1_thetas}, and the picture on the right shows the combined modification. The purple cell on the right is $C$ referred to in the proof of Theorem \ref{thm_mainthm_bridges}.}
    \label{fig: modifications}
\end{figure}

\bibliographystyle{abbrv}
\bibliography{references}

\begin{thebibliography}{10}

\bibitem{BPR2014}
M.~Baker, S.~Payne, and J.~Rabinoff.
\newblock Nonarchimedean geometry, tropicalization, and metrics on curves.
\newblock {\em Algebr. Geom}, 3(1):63--105, 2016.

\bibitem{corey2021hyperelliptictype}
D.~Corey.
\newblock Tropical curves of hyperelliptic type.
\newblock {\em J. Algebraic Combin.}, 53(4):1215--1229, 2021.

\bibitem{CM2016}
M.~A. Cueto and H.~Markwig.
\newblock How to repair tropicalizations of plane curves using modifications.
\newblock {\em Experimental Mathematics}, 25(2):130--164, 2016.

\bibitem{CM2019}
M.~A. Cueto and H.~Markwig.
\newblock Tropical geometry of genus two curves.
\newblock {\em Journal of Algebra}, 517:457--512, 2019.

\bibitem{MS2015}
D.~Maclagan and B.~Sturmfels.
\newblock {\em Introduction to tropical geometry}, volume 161.
\newblock American Mathematical Soc., 2015.

\bibitem{Mik06}
G.~Mikhalkin.
\newblock Tropical geometry and its applications.
\newblock In {\em International {C}ongress of {M}athematicians. {V}ol. {II}}, pages 827--852. Eur. Math. Soc., Z\"{u}rich, 2006.

\bibitem{Oscar}
Oscar -- open source computer algebra research system, version 0.13.0, 2022.

\bibitem{P2009}
S.~Payne.
\newblock Analytification is the limit of all tropicalizations.
\newblock {\em Mathematical Research Letters}, 16(3):543--556, 2009.

\bibitem{S2020}
V.~M. Schleis.
\newblock {\em Faithful tropicalizations of hyperelliptic curves of genus three}.
\newblock Bachelor thesis, TU Kaiserslautern, 2020.

\bibitem{Tev07}
J.~Tevelev.
\newblock Compactifications of subvarieties of tori.
\newblock {\em American Journal of Mathematics}, 129(4):1087--1104, 2007.

\end{thebibliography}

\end{document}